\begin{document}

\title{Reversibility of rings with respect to the Zhou Radical}

\author {Tugce Pekacar Calci}
\address{Tugce Pekacar Calci, Department of Mathematics, Ankara University,  Turkey}
\email{tcalci@ankara.edu.tr}

\author {Serhat Emirhan Soycan}
\address{
Ankara University Graduate School of National and Applied Sciences, Ankara, Turkey}
\email{<sesoycan@ankara.edu.tr>}

\newtheorem{thm}{Theorem}[section]
\newtheorem{lem}[thm]{Lemma}
\newtheorem{prop}[thm]{Proposition}
\newtheorem{cor}[thm]{Corollary}
\newtheorem{exs}[thm]{Examples}
\newtheorem{defn}[thm]{Definition}
\newtheorem{nota}{Notation}
\newtheorem{rem}[thm]{Remark}
\newtheorem{ex}[thm]{Example}
\newtheorem{que}[thm]{Question}

\begin{abstract} Let $R$ be a ring with identity and $\delta(R)$ denote the Zhou
radical of $R$. A ring $R$ is called {\it $\delta$-reversible}
 if for any $a$, $b \in R$,  $ab = 0$ implies $ba \in \delta(R)$. In this paper,
 we give some properties of $\delta$-reversible rings. We examine some extensions of $\delta$-reversible rings. \vspace{2mm}

\noindent {\bf2010 MSC:}  13C99, 16D80, 16U80

\noindent {\bf Key words:} Reversible ring, $\delta$-reversible ring, ring extension
\end{abstract}

\maketitle

\section{Introduction} Throughout this paper all rings are associative with identity
unless otherwise stated. 

In module theory, $\delta$ submodule has an important role. Let $M$ be a right module and $N$ a submodule of $M$. Zhou introduced the notion of $\delta$-small
submodule in \cite{Zhou}. He called $N$ \textit{$\delta$-small} in $M$ if whenever $M = N + L$ and $M/L$ is singular, then $M = L$. The sum of $\delta$-small submodules is denoted by $\delta(M)$. Considering the
ring $R$ as a right $R$-module over itself, the ideal $\delta(R)$ is introduced as a sum of $\delta$-small
right ideals of $R$. For the reason that Zhou introduced the delta submodule $\delta(M)$ of a
module $M$, $\delta(M)$ is called \textit{Zhou radical} of $M$ in \cite{Burcu}.

According to
Cohn \cite{Co}, a ring $R$ is said to be $reversible$ if for any
$a$, $b\in R$, $ab = 0$ implies $ba = 0$. In \cite{Mbc}, the authors studied $J$-reversible rings. A ring $R$ is said to be \textit{$J$-reversible}, if for $a,b \in R$, $ab=0$ implies $ba\in J(R)$. $\delta(R)$ is the intersection of all essential maximal right ideals of
$R$ and $J(R)$ is the intersection of all maximal right ideals. So clearly, $J(R) \subseteq \delta(R)$. With this motivation we define $\delta$-reversible rings. A ring $R$ is said to be \textit{$\delta$-reversible}, if for $a,b \in R$, $ab=0$ implies $ba\in \delta(R)$. It is clear that every $J$-reversible ring is $\delta$-reversible. But the converse is not true in general. If $S_r\subseteq J(R)$ for a ring $R$, then $R$ is
$\delta$-reversible if and only if it is $J$-reversible where $S_r$ denotes the right socle of a ring. Recall that a ring $R$ is called $\delta$-clean if for every $x\in R$, there exist an idempotent $e$ and $d\in \delta(R)$ such that $a=e+d$, \cite{ozcangurgun}. In this direction we prove that every $\delta$-clean ring is $\delta$-reversible. We investigate the properties of $\delta$-reversible rings. We give an example to illustrate that $\delta$-reversible rings are not abelian (i.e., rings in which idempotents are central), but if R is $\delta$-reversible and every idempotent lifts modulo $\delta(R)$, then $R/\delta(R)$ is abelian. Section 3 deals with some extensions of rings. It is proved $R$ is $\delta$-reversible if and only if upper triangular matrix
ring over $R$ is $\delta$-reversible. Also we prove that $n \times n$ matrix rings over $R$ are need not to be $\delta$-reversible for a ring $R$.

In what follows, $\mathbb{Z}$ denote the ring of integers and the ring of integers modulo $n$ is
denoted by $\mathbb{Z}_n$. We write
$M_n(R)$ for the ring of all $n\times n$ matrices and $T_n(R)$ for
the ring of all $n\times n$ upper triangular matrices over $R$. We denote
$S_r =$ Soc$(R_R)$ for the right socle of a ring $R$. Also we write $R[[x]]$, $J(R)$ and $U(R)$ for the power series ring over a ring $R$, the Jacobson radical of $R$ and the set of units of $R$, respectively. Moreover $l_R(a)$ and $r_R(a)$ denote the set of left annihilators and right annihilators of $a$ in $R$, respectively.

\section{$\delta$-Reversible Rings}

In this section we introduce a class of rings, called $\delta$-reversible rings, which
is a generalization of $J$-reversible rings. We investigate which properties of
$J$-reversible rings hold for this general setting. We give relations between $\delta$-reversible rings and some related rings. We begin with the equivalent conditions for the Zhou radical which is
proved in \cite{Zhou}.

\begin{thm}\cite[Theorem 1.6]{Zhou}
    Let $R$ be a ring. The following sets are equal to $\delta(R)$.
    \begin{enumerate}
        \item $R_1=$ the intersection of all essential maximal right ideals of $R$.
        \item $R_2=$ the unique largest $\delta$-small right ideal of $R$.
        \item $R_3=\{x\in R:xR+K_R=R$ implies $K_R$ is a direct summand of $R_R\}$.
        \item $R_4=\bigcap\{ \text{ideals } P \text{ of } R:~R/P \text{ has a faithful singular simple module}\}$.
        \item $R_5=\{x\in R : \text{for all } y\in R \text{ there exists a semisimple right ideal } Y \text{ of } R \text{ such that } \\
        (1+xy)R\oplus Y=R_R\} $
        
    \end{enumerate}
\end{thm}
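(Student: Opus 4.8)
The plan is to prove $R_1=R_2=R_3=R_4=R_5=\delta(R)$ by showing that each $R_i$ coincides with $R_1$, the intersection of the essential maximal right ideals; $R_1$ is the natural hub because the singularity condition built into the definition of a $\delta$-small submodule translates directly into it through the dictionary: a simple right module $R/M$ is singular if and only if $r_R(1+M)=M$ is an essential maximal right ideal. I will also use the elementary remark that $R_R$ has no nonzero singular direct summand: if $eR$ with $e^2=e$ were a nonzero singular direct summand, then every element of $eR$ — in particular $e$ — would have essential right annihilator, yet $r_R(e)=(1-e)R$ meets $eR$ in $0$, which is absurd.

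\smallskip

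\emph{Step 1: $\delta(R)=R_2=R_1$.} First, a finite sum of $\delta$-small right ideals is $\delta$-small: from $A+B+L=R$ with $R/L$ singular one gets $A+(B+L)=R$ with $R/(B+L)$ singular, hence $R=B+L$, hence $R=L$. Consequently, if $\delta(R)+L=R$ and $1=d+\ell$ with $d\in\delta(R)$, then $d$ lies in a finite sum $F$ of $\delta$-small right ideals, $F$ is $\delta$-small, $F+L=R$, and if moreover $R/L$ is singular then $R=L$; so $\delta(R)$ is $\delta$-small, and being the sum of all $\delta$-small right ideals it is the largest one, i.e.\ $\delta(R)=R_2$. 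Next, every $\delta$-small right ideal $I$ lies in every essential maximal right ideal $M$ (otherwise $I+M=R$ with $R/M$ simple singular, contradicting $\delta$-smallness), so $\delta(R)\subseteq R_1$; and $R_1$ is $\delta$-small, for if $R_1+L=R$ with $R/L$ singular and $L\neq R$, then any maximal right ideal $M\supseteq L$ has $R/M$ simple singular, hence $M$ essential, hence $R_1\subseteq M$, forcing $R=R_1+L\subseteq M$. Thus $R_1\subseteq R_2=\delta(R)$, and $\delta(R)=R_1=R_2$.

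\smallskip

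\emph{Step 2: $R_3=R_5=\delta(R)$.} If $x\in R_3$ and $x\notin M$ for an essential maximal right ideal $M$, then $xR+M=R$, so $M$ is a direct summand of $R_R$ with complement $\cong R/M$, a nonzero singular summand — impossible; hence $R_3\subseteq R_1$. If $x\in R_5$ and $x\notin M$, write $1=xr+m$ with $m\in M$ and set $y=-r$, so $1+xy=m\in M$; choosing a semisimple right ideal $Y$ with $mR\oplus Y=R_R$, the modular law gives $M=mR\oplus(M\cap Y)$, so $R/M\cong Y/(M\cap Y)$ is isomorphic to a direct summand of the semisimple $Y$, hence of $R_R$, and it is simple singular — impossible; hence $R_5\subseteq R_1$. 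For the reverse inclusions I invoke \cite[Lemma~1.1]{Zhou}: $N$ is $\delta$-small in $M$ iff every $X$ with $X+N=M$ admits a projective semisimple $Y\subseteq N$ with $M=X\oplus Y$. For $x\in\delta(R)$, the right ideals $xR$ and all $xyR$ are contained in $\delta(R)$, hence $\delta$-small; applying the lemma to $xR+K=R$ makes $K$ a direct summand, so $x\in R_3$, while applying it to the always-valid identity $(1+xy)R+xyR=R$ yields a semisimple $Y$ with $(1+xy)R\oplus Y=R$, so $x\in R_5$. With Step 1 this gives $R_3=R_5=\delta(R)$.

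\smallskip

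\emph{Step 3: $R_4=\delta(R)$, and the main obstacle.} For a maximal right ideal $M$ put $P_M=\{a\in R:Ra\subseteq M\}=\operatorname{ann}_R(R/M)=\bigcap_{r\notin M}r_R(r+M)$, the largest two-sided ideal inside $M$. If $M$ is essential in $R_R$, then for each $r\notin M$ the right ideal $r_R(r+M)$ is essential maximal, since $R/r_R(r+M)\cong(r+M)R=R/M$ is simple singular; hence $P_M$ is an intersection of essential maximal right ideals and $R_1\subseteq P_M$. Now let $P$ be an ideal for which $R/P$ carries a faithful simple module $S$ that is singular over $R$; writing $S\cong R/M$ with $M\supseteq P$ a maximal right ideal, faithfulness forces $P=\operatorname{ann}_R(S)=P_M$ and singularity of $S$ forces $M$ essential in $R_R$, so $R_1\subseteq P_M=P$; as $P$ was arbitrary, $R_1\subseteq R_4$. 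Conversely, for any essential maximal right ideal $M$ the ideal $P_M\subseteq M$ lies in the family defining $R_4$ (with witness $R/M$), so $R_4\subseteq P_M\subseteq M$; intersecting over all such $M$ gives $R_4\subseteq R_1$. Hence $R_4=R_1=\delta(R)$, and the chain is complete. The single non-formal ingredient is \cite[Lemma~1.1]{Zhou}, the projective-semisimple description of $\delta$-smallness, whose nontrivial implication (that a $\delta$-small submodule sits as a split extension of a small submodule by a projective semisimple one) does the real work and is exactly what produces the ``semisimple complement'' shape of $R_3$ and $R_5$; apart from this, the proof is careful bookkeeping with essential submodules and the singular submodule around the dictionary ``$R/M$ simple singular $\iff$ $M$ essential maximal,'' together with the remark that $R_R$ has no nonzero singular direct summand, which must be established before Step 2.
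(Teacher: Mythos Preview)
The paper does not prove this theorem at all; it is quoted verbatim as \cite[Theorem~1.6]{Zhou} and used as a preliminary, so there is no proof in the present paper to compare your argument against. That said, your proof is correct: Step~1 cleanly derives $\delta(R)=R_2=R_1$ from the definition of $\delta$-small and the dictionary ``$R/M$ simple singular $\Leftrightarrow$ $M$ essential maximal''; Step~2 gets $R_3,R_5\subseteq R_1$ from the observation that $R_R$ has no nonzero singular direct summand and the reverse inclusions from the projective--semisimple characterization of $\delta$-smallness \cite[Lemma~1.1]{Zhou}; and Step~3 handles $R_4$ by the annihilator computation $P_M=\bigcap_{r\notin M}r_R(r+M)$, each factor being essential maximal. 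The only external input you invoke is \cite[Lemma~1.1]{Zhou}, which you flag explicitly, and this is exactly the lemma Zhou uses in his own proof of the theorem; in that sense your reconstruction is faithful to the original source even though the present paper contains nothing to compare it with.
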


\begin{rem}\rm(\cite[Corollary 1.7]{Zhou}\rm)
\begin{enumerate}
\item The ideal $\delta(R)$ of a ring $R$ is also characterized as $J(R/S_r) = \delta(R)/S_r$
where $S_r$ denotes the right socle of the ring $R$, that is, $S_r$ is the sum of minimal right
ideals of $R$.   
\item $\delta(R)$ is the intersection of all essential maximal right ideals of
$R$ and $J(R)$ is the intersection of all maximal right ideals. So clearly, $J(R) \subseteq \delta(R)$.
Furthermore $R=\delta(R)$ if and only if $R$ is semisimple.
\end{enumerate}
\end{rem}

The following Lemma helps us throughout the paper.

\begin{lem}\label{semiprime}
$\delta(R)$ is a semiprime ideal of a ring $R$.
\end{lem}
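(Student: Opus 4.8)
The plan is to exhibit $\delta(R)$ as an intersection of prime ideals, since an arbitrary intersection of prime ideals is automatically semiprime. The natural tool is part $(4)$ of the preceding theorem, which identifies $\delta(R)$ with $R_4=\bigcap\{\text{ideals }P\text{ of }R : R/P \text{ has a faithful singular simple module}\}$.

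First I would record the two facts to be used. An ideal $I$ of $R$ is \emph{semiprime} exactly when, for every ideal $A$ of $R$, $A^2\subseteq I$ forces $A\subseteq I$; a prime ideal is the stronger condition that $AB\subseteq I$ forces $A\subseteq I$ or $B\subseteq I$. The one standard observation is that an intersection $\bigcap_{\alpha}P_{\alpha}$ of prime ideals is semiprime: if $A^2\subseteq\bigcap_{\alpha}P_{\alpha}$, then $A^2\subseteq P_{\alpha}$ and hence $A\subseteq P_{\alpha}$ for every $\alpha$ by primeness, so $A\subseteq\bigcap_{\alpha}P_{\alpha}$.

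Next I would verify that each ideal $P$ occurring in the description of $R_4$ is prime. By assumption $R/P$ has a faithful simple right module, so $R/P$ is a right primitive ring; and every primitive ring is prime, whence $R/P$ is prime, i.e.\ $P$ is a prime ideal of $R$. Applying the observation of the previous paragraph to the family defining $R_4$, we conclude that $\delta(R)=R_4$ is a semiprime ideal of $R$. In the degenerate case where this family is empty we have $\delta(R)=R$, which is trivially a semiprime ideal (this is precisely the case in which $R$ is semisimple).

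The proof is essentially bookkeeping once the identity $\delta(R)=R_4$ is invoked; the only point needing a moment's attention is that a faithful simple module alone suffices for primitivity and hence primeness — the singularity of the module plays no role here — so I do not expect any real obstacle.
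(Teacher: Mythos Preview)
Your argument is correct but follows a different route from the paper's. The paper works with the description $\delta(R)=R_1$, the intersection of the essential maximal right ideals, and argues directly at the element level: assuming $aRa\subseteq\delta(R)$ while $a\notin M$ for some essential maximal right ideal $M$, one writes $1=m+ar$ with $m\in M$, multiplies on the right by $a$, and obtains $a=ma+ara\in M$, a contradiction. You instead invoke $\delta(R)=R_4$ and the chain ``faithful simple module $\Rightarrow$ primitive $\Rightarrow$ prime,'' then use that an intersection of prime ideals is semiprime. Your approach is more conceptual and actually yields the slightly sharper fact that $\delta(R)$ is an intersection of (right) primitive ideals; the paper's proof is entirely elementary and self-contained, and it delivers the element-wise form $aRa\subseteq\delta(R)\Rightarrow a\in\delta(R)$ that is exactly what gets applied later (in the $\delta$-linear Armendariz proposition). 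Since the ideal-theoretic and element-wise formulations of semiprimeness are equivalent, your proof suffices; it would be worth recording that equivalence so the lemma is available in the form the paper uses.
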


\begin{proof}
Assume that $aRa\subseteq \delta (R)$ and $a\notin \delta(R)$.
Then there exists a essential maximal right ideal $M_i$ of $R$
such that $a\notin M_i$. Hence $aR+M_i=R$. Thus $1=m_i+ar$ where
$m_i\in M_i$, $r\in R$. We have $a=m_ia+ara$ which is a
contradiction. So $a\in \delta(R)$.
\end{proof}

We now we represent our main definition.

\begin{defn} {\rm A ring $R$ is said to be $\delta$-reversible ring, if
$ab=0$ for any $a,b\in R$ implies $ba\in \delta(R)$.}
\end{defn}

All reduced, symmetric and reversible rings are $\delta$-reversible. It is clear that every $J$-reversible ring is $\delta$-reversible,
as $J(R)\subseteq \delta(R)$. The following example shows that the
converse does not hold in general.

\begin{ex}\label{tersornek}
{\rm Consider the ring $M_2(\mathbb Z_3)$. By \cite[Lemma 2.5]{Burcu}, $\delta (M_2(\mathbb
Z_3))=M_2(\delta (\mathbb Z_3))$. Also $J (M_2(\mathbb Z_3))=M_2(J (\mathbb
Z_3))$. As $\mathbb Z_3$ is semisimple, $\delta(\mathbb Z_3)=\mathbb Z_3$.
Then $\delta(M_2(\mathbb Z_3))=M_2(\mathbb Z_3)$. So it is clear that
$M_2(\mathbb Z_3)$ is $\delta$-reversible. Also we have $J(\mathbb
Z_3)=\{\overline 0\}$ and so
$J(M_2(\mathbb Z_3))=\begin{bmatrix}
  \overline 0 & \overline 0 \\
  \overline 0 & \overline 0 \\
\end{bmatrix}$. Choose $A=\begin{bmatrix}
  \overline 1 & \overline 2 \\
  \overline 0 & \overline 0\\
\end{bmatrix}$ and $B=\begin{bmatrix}
  \overline 2 & \overline 0 \\
  \overline2 & \overline 0 \\
\end{bmatrix}$ in $M_2(\mathbb Z_3)$. Although $AB=\begin{bmatrix}
  \overline 0 & \overline 0 \\
  \overline 0 & \overline 0 \\
\end{bmatrix}$, $BA=\begin{bmatrix}
  \overline 2 & \overline 1 \\
  \overline 2 & \overline 1 \\
\end{bmatrix}\notin J(M_2(\mathbb Z_3))$. Therefore $M_2(\mathbb Z_3)$ is not
a $J$-reversible ring.}
\end{ex}

The following proposition exhibits $\delta$-reversible rings are
$J$-reversible under which conditions.

\begin{prop}
Let $R$ be a ring and $S_r\subseteq J(R)$. Then $R$ is
$\delta$-reversible if and only if it is $J$-reversible.
\end{prop}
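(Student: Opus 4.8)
The plan is to reduce the equivalence to the single ideal identity $\delta(R) = J(R)$, which holds precisely because of the standing hypothesis $S_r \subseteq J(R)$. Once this identity is in place, both implications become immediate from the definitions: for $a,b \in R$ with $ab = 0$, the condition $ba \in \delta(R)$ is literally the same as $ba \in J(R)$.

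First I would invoke part (1) of the Remark, which gives $\delta(R)/S_r = J(R/S_r)$. The remaining task is to identify $J(R/S_r)$ with $J(R)/S_r$. This is the standard fact that whenever $I$ is an ideal of $R$ with $I \subseteq J(R)$, one has $J(R/I) = J(R)/I$; I would prove it by observing that every maximal right ideal of $R$ contains $I$ (since $I \subseteq J(R)$), so the correspondence theorem furnishes a bijection between the maximal right ideals of $R$ and those of $R/I$, and intersecting the two families yields $J(R)/I = J(R/I)$. Applying this with $I = S_r$ — which is legitimate exactly because we assume $S_r \subseteq J(R)$ — gives $J(R/S_r) = J(R)/S_r$, and comparing with the Remark yields $\delta(R)/S_r = J(R)/S_r$, whence $\delta(R) = J(R)$.

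With $\delta(R) = J(R)$ established, the forward direction is clear: if $R$ is $\delta$-reversible and $ab = 0$, then $ba \in \delta(R) = J(R)$, so $R$ is $J$-reversible. The converse holds for every ring, since $J(R) \subseteq \delta(R)$, and was already recorded in the discussion preceding the statement.

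I do not expect a genuine obstacle. The only step requiring a word of care is the identity $J(R/S_r) = J(R)/S_r$, and this depends solely on the inclusion $S_r \subseteq J(R)$, not on any reversibility hypothesis; if one wishes to avoid re-deriving the radical identity it may simply be cited.
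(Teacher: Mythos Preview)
Your proposal is correct and follows essentially the same route as the paper: both arguments use the hypothesis $S_r \subseteq J(R)$ to deduce $J(R)/S_r = J(R/S_r)$, combine this with the identity $\delta(R)/S_r = J(R/S_r)$ from Zhou's result, conclude $\delta(R) = J(R)$, and then read off both implications from the definitions. You simply spell out the correspondence-theorem justification for $J(R/S_r) = J(R)/S_r$ in more detail than the paper, which cites it directly.
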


\begin{proof}
Assume that $R$ is $\delta$-reversible and $S_r\subseteq J(R)$.
Then $J(R)/S_r=J(R/S_r)=\delta(R)/S_r$ by \cite
[Corollary 1.7]{Zhou}. Therefore $\delta(R)=J(R)$. Hence $R$ is
$J$-reversible. The converse is clear.
\end{proof}

\begin{thm}
    Let $R/S_r$ be a $J$-reversible ring, then $R$ is $\delta$-reversible.
\end{thm}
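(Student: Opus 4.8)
The plan is to pull the hypothesis back along the canonical projection $\pi\colon R\to R/S_r$, using the identity $\delta(R)/S_r=J(R/S_r)$ recorded in the remark following Theorem~2.1 (Zhou's Corollary~1.7). First I would note that the right socle $S_r=\mathrm{Soc}(R_R)$ is a two-sided ideal of $R$, so that $R/S_r$ is genuinely a ring and $\pi$ a ring epimorphism; this is the only structural point that needs to be invoked.

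Now take $a,b\in R$ with $ab=0$. Applying $\pi$ gives $\pi(a)\pi(b)=\pi(ab)=0$ in $R/S_r$. Since $R/S_r$ is $J$-reversible by hypothesis, this forces $\pi(b)\pi(a)\in J(R/S_r)$, that is, $\pi(ba)\in J(R/S_r)$.

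By the quoted characterization, $J(R/S_r)=\delta(R)/S_r$. Because $S_r\subseteq\delta(R)$, the preimage $\pi^{-1}\bigl(\delta(R)/S_r\bigr)$ is exactly $\delta(R)$; hence $\pi(ba)\in\delta(R)/S_r$ yields $ba\in\delta(R)$. As $a,b$ were arbitrary with $ab=0$, this shows $R$ is $\delta$-reversible.

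There is essentially no serious obstacle here: the entire content is the translation between $J(R/S_r)$ and $\delta(R)/S_r$, which is already available to us. The one place to stay alert is the bookkeeping with the projection — making sure that ``$\pi(ba)\in J(R/S_r)$'' really pulls back to ``$ba\in\delta(R)$'' and not merely to ``$ba\in\delta(R)+S_r$'', which is immediate once one recalls $S_r\subseteq\delta(R)$.
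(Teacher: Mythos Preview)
Your argument is correct and is essentially identical to the paper's own proof: both pass to $R/S_r$, apply $J$-reversibility there, invoke $J(R/S_r)=\delta(R)/S_r$, and pull back using $S_r\subseteq\delta(R)$. Your write-up is merely more explicit about the projection map and the preimage step.
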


\begin{proof}
    Let $ab=0$. Then $\overline{ab}=0$. It means $\overline{ba}\in J(R/S_r)=\delta(R)/S_r$. Since $S_r\subseteq \delta(R)$, $ba\in \delta(R)$.
\end{proof}

Recall that a ring $R$ is called \textit{abelian} if all idempotent elements of it are central. 

\begin{prop}\label{abel1}
Let $R$ be a ring and every idempotent of $R$ lift modulo
$\delta(R)$. Then $R$ is $\delta$-reversible, $R/\delta(R)$ is an
Abelian ring.
\end{prop}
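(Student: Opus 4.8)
The plan is to show that every idempotent of $R/\delta(R)$ is central, which is exactly what ``abelian'' asks for. Since idempotents lift modulo $\delta(R)$, an arbitrary idempotent of $R/\delta(R)$ has the form $\overline{e}=e+\delta(R)$ with $e=e^{2}\in R$. Fixing such an $e$ and writing $f=1-e$, so that $f^{2}=f$ and $ef=fe=0$, it suffices to prove that $er-re\in\delta(R)$ for every $r\in R$ (and here we of course use the standing hypothesis that $R$ is $\delta$-reversible, as in the introduction).

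First I would record the Peirce-type decomposition $er=ere+erf$ and $re=ere+fre$, which gives $er-re=erf-fre$; thus the whole problem reduces to showing that both $erf$ and $fre$ lie in $\delta(R)$. For the first, note that $f\cdot(erf)=(fe)rf=0$, so applying $\delta$-reversibility of $R$ to this product yields $(erf)\cdot f\in\delta(R)$, and since $(erf)f=erf^{2}=erf$ we conclude $erf\in\delta(R)$. The second is symmetric: $e\cdot(fre)=(ef)re=0$, so $\delta$-reversibility gives $(fre)\cdot e\in\delta(R)$, and $(fre)e=fre$, whence $fre\in\delta(R)$. Adding these, $er-re=erf-fre\in\delta(R)$, i.e. $\overline{e}\,\overline{r}=\overline{r}\,\overline{e}$ in $R/\delta(R)$; as $\overline{e}$ and $r$ were arbitrary, $R/\delta(R)$ is abelian.

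I do not expect a genuine obstacle: the computation is short once one chooses the products $f(erf)$ and $e(fre)$, which vanish automatically from $fe=ef=0$. The one step that really matters is the lifting hypothesis — it is precisely what allows us to replace an idempotent of the quotient by an honest idempotent $e$ of $R$, which must be available before $\delta$-reversibility of $R$ can be invoked; without it the argument has no starting point. It is also worth noting that $\delta$-reversibility is used twice, once on each side, so both hypotheses are genuinely in play.
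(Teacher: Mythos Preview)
Your proof is correct and follows essentially the same route as the paper: lift an idempotent of $R/\delta(R)$ to $e=e^{2}\in R$, then use $\delta$-reversibility on zero products built from $e$ and $1-e$ to force the off-diagonal Peirce pieces $er(1-e)$ and $(1-e)re$ into $\delta(R)$, yielding $er-re\in\delta(R)$. The only cosmetic difference is the choice of zero product: the paper starts from $e(1-e)x=0$ (and its symmetric analogue) to obtain $ex-exe,\,xe-exe\in\delta(R)$, while you start from $f(erf)=0$ and $e(fre)=0$; both lead to the same two containments and the same subtraction.
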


\begin{proof}
Let $ \overline{f}=\overline{f}^2\in R/\delta(R)$. By assumption,
there exists $e^2=e\in R$ such that $e-f\in \delta(R)$. Then $e(1-e)=0$ and so $e(1-e)x=0$ for
all $x\in R$. As $R$ is $\delta$-reversible, $ex-exe\in
\delta(R)$. Via similar argument, we have $xe-exe\in \delta(R)$.
Therefore $xe-ex\in \delta(R)$. This implies that
$ex+\delta(R)=xe+\delta(R)$.
\end{proof}

\begin{cor}\label{idempoentli}
Let $R$ be a $\delta$-reversible ring. Then for every $e^2=e\in
R$, $eR(1-e)+(1-e)Re\subseteq \delta(R)$.
\end{cor}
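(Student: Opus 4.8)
The plan is to derive both containments $eR(1-e) \subseteq \delta(R)$ and $(1-e)Re \subseteq \delta(R)$ by exhibiting, for each element, a product of two ring elements that vanishes, and then invoking $\delta$-reversibility together with the fact (Lemma \ref{semiprime}) that $\delta(R)$ is a (two-sided) ideal. First I would fix $e^2 = e \in R$ and an arbitrary $r \in R$. For the element $er(1-e)$, observe that $(1-e)\cdot (er(1-e)) = (1-e)er(1-e) = 0$, since $(1-e)e = 0$. Applying the $\delta$-reversibility hypothesis to the pair $a = 1-e$, $b = er(1-e)$ with $ab = 0$ yields $ba = er(1-e)(1-e) = er(1-e) \in \delta(R)$. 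Since $r$ was arbitrary, $eR(1-e) \subseteq \delta(R)$.

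For the symmetric containment, I would run the mirror-image argument: for $r \in R$, the element $(1-e)re$ satisfies $e \cdot \big((1-e)re\big) = e(1-e)re = 0$. Applying $\delta$-reversibility to $a = (1-e)re$, $b = e$ — or, matching the order above, to $a = e$, $b = (1-e)re$ with $ab = 0$ — gives $ba = (1-e)re \cdot e = (1-e)re \in \delta(R)$, so $(1-e)Re \subseteq \delta(R)$. Finally, since $\delta(R)$ is closed under addition, the sum $eR(1-e) + (1-e)Re$ is contained in $\delta(R)$, which is the claim.

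I do not anticipate a genuine obstacle here; the only point requiring any care is the bookkeeping of which factor plays the role of $a$ and which plays the role of $b$ when invoking the definition of $\delta$-reversibility (the definition is stated only for "$ab = 0 \implies ba \in \delta(R)$", not symmetrically), and making sure that in each case the product that is forced into $\delta(R)$ really does simplify — using $e^2 = e$ and hence $(1-e)^2 = 1-e$ — back to the original element $er(1-e)$ or $(1-e)re$ rather than to something strictly smaller. Both reductions are immediate idempotent computations, so the proof is short.
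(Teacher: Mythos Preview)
Your proof is correct and follows essentially the same approach as the paper: the corollary is extracted from the proof of Proposition~\ref{abel1}, where one uses the relations $e(1-e)=(1-e)e=0$ together with $\delta$-reversibility to force $ex-exe$ and $xe-exe$ into $\delta(R)$. Your choice of factorization (taking $a=1-e$, $b=er(1-e)$ rather than $a=x(1-e)$, $b=e$) is a minor variant of the same computation and, if anything, is slightly cleaner since the product $ba$ collapses immediately to the desired element via $(1-e)^2=1-e$ and $e^2=e$.
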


Corollary \ref{idempoentli} is very useful to determine if a ring
is not $\delta$-reversible.

\begin{ex}\label{matrixring}
Let $R=M_2(\mathbb Z)$. Then $\delta(M_2(\mathbb Z))=
\begin{bmatrix}
  0 & 0 \\
  0 & 0 \\
\end{bmatrix}$ since $\delta(\mathbb Z)=0$. For idempotent $e=
\begin{bmatrix}
  1 & 0 \\
  0 & 0 \\
\end{bmatrix}$, $eR(1-e)+(1-e)Re=\begin{bmatrix}
  0 & \mathbb Z \\
  \mathbb Z & 0 \\
\end{bmatrix}$. Therefore $R$ is not $\delta$-reversible, by Corollary
\ref{idempoentli}.
\end{ex}

Let $R$ be a ring and $I$ an ideal of $R$. Recall that $I$ is
semiprime, if $aRa\subseteq I$ implies $a\in I$, for some $a\in
R$.

The next result is important to decide if a ring is
$\delta$-reversible, or not. For the proof of next Lemma, you
should look up to \cite{Mbc}.

\begin{lem}
The following are equivalent for a ring $R$.
\begin{enumerate}
\item $R$ is $\delta$-reversible.
\item For any $r\in R$, if $r^2=0$, then $r\in \delta(R)$.
\item For any $r\in R$, if $r^2=0$, then $rm-mr\in \delta(R)$ for all
$m\in R$.
\end{enumerate}
\end{lem}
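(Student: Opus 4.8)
The plan is to establish the cycle of implications $(1)\Rightarrow(2)\Rightarrow(3)\Rightarrow(1)$, the two main tools being Lemma \ref{semiprime} (semiprimeness of $\delta(R)$) and the description of $\delta(R)$ as the intersection of all essential maximal right ideals of $R$ given in \cite[Theorem 1.6]{Zhou}.

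For $(1)\Rightarrow(2)$, assume $R$ is $\delta$-reversible and $r^2=0$. For every $x\in R$ we have $r(rx)=r^2x=0$, so $\delta$-reversibility gives $(rx)r=rxr\in\delta(R)$. Hence $rRr\subseteq\delta(R)$, and since $\delta(R)$ is semiprime by Lemma \ref{semiprime}, we conclude $r\in\delta(R)$. The step $(2)\Rightarrow(3)$ is then immediate: if $r^2=0$ then $r\in\delta(R)$ by $(2)$, and since $\delta(R)$ is a two-sided ideal, $rm\in\delta(R)$ and $mr\in\delta(R)$ for every $m\in R$, so $rm-mr\in\delta(R)$.

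For $(3)\Rightarrow(1)$, let $a,b\in R$ with $ab=0$ and set $r=ba$, so that $r^2=b(ab)a=0$. Suppose, for contradiction, that $r\notin\delta(R)$. Since $\delta(R)$ is the intersection of the essential maximal right ideals of $R$, there is an essential maximal right ideal $M$ with $r\notin M$, so $rR+M=R$ and we may write $1=rt+m$ with $t\in R$ and $m\in M$. Multiplying on the left by $r$ and using $r^2=0$ yields $r=r^2t+rm=rm$. Now apply $(3)$ to the square-zero element $r$ with this particular $m$: $rm-mr\in\delta(R)\subseteq M$. As $m\in M$ and $M$ is a right ideal, $mr\in M$, hence $rm=(rm-mr)+mr\in M$, and therefore $r=rm\in M$, a contradiction. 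Thus $ba=r\in\delta(R)$, so $R$ is $\delta$-reversible.

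I expect the only genuine obstacle to be the implication $(3)\Rightarrow(1)$: converting the commutator condition $rm-mr\in\delta(R)$ into the membership $ba\in\delta(R)$ requires arguing one essential maximal right ideal at a time and exploiting the identity $r=rm$, which is available precisely because $r^2=0$ annihilates the term $r^2t$. This is essentially the pattern already used in the proof of Lemma \ref{semiprime}. The remaining implications are short and formal once semiprimeness of $\delta(R)$ is available.
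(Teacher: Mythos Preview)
Your argument is correct. Note that the paper does not actually supply its own proof of this lemma: it simply directs the reader to \cite{Mbc}, where the analogous result for $J$-reversibility is proved. Your self-contained cycle $(1)\Rightarrow(2)\Rightarrow(3)\Rightarrow(1)$ uses exactly the tools available in the present paper---semiprimeness of $\delta(R)$ from Lemma~\ref{semiprime} for $(1)\Rightarrow(2)$, and the description of $\delta(R)$ as the intersection of essential maximal right ideals for $(3)\Rightarrow(1)$---and the latter step reproduces the pattern of the proof of Lemma~\ref{semiprime} just as you anticipated. This is precisely the shape the $J$-reversible argument in \cite{Mbc} takes (with maximal right ideals in place of essential maximal ones), so your proof is the natural $\delta$-analogue and nothing is missing.
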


Now we give a different characterization for $\delta$-reversible
rings.

\begin{thm}
The following are equivalent for a ring $R$.
\begin{enumerate}
\item $R$ is $\delta$-reversible.
\item For all $a\in R$, $a(l_R(a))\subseteq \delta(R)$.
\item For all $a\in R$, $(r_R(a))a\subseteq \delta(R)$.
\end{enumerate}
\end{thm}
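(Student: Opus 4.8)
The plan is to prove the cyclic chain of implications $(1)\Rightarrow(2)\Rightarrow(3)\Rightarrow(1)$, using throughout the characterization from the previous Lemma that $R$ is $\delta$-reversible precisely when $r^2=0$ forces $r\in\delta(R)$, together with the fact (Lemma \ref{semiprime}) that $\delta(R)$ is a semiprime ideal. The core observation is that $a\,l_R(a)$ and $(r_R(a))a$ are sets of square-zero elements: if $ba=0$ then $(ab)^2 = a(ba)b = 0$, and dually if $ab=0$ then $(ba)^2 = b(ab)a = 0$.

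First I would do $(1)\Rightarrow(2)$. Take $x\in a\,l_R(a)$, so $x = ab$ with $ba = 0$. Then $x^2 = abab = a(ba)b = 0$, and $\delta$-reversibility (in the form $r^2=0\Rightarrow r\in\delta(R)$) gives $x\in\delta(R)$; hence $a\,l_R(a)\subseteq\delta(R)$. The implication $(1)\Rightarrow(3)$ is identical with left and right interchanged: for $x\in (r_R(a))a$ write $x=ba$ with $ab=0$, so $x^2 = baba = b(ab)a = 0$ and $x\in\delta(R)$.

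Next, $(2)\Rightarrow(1)$. Suppose $ab=0$; I want $ba\in\delta(R)$. Since $b\,a = 0$ is not directly available, I instead show $(ba)R(ba)\subseteq\delta(R)$ and invoke semiprimeness of $\delta(R)$ from Lemma \ref{semiprime}. For any $r\in R$, consider $(ba)r(ba) = b(arb)a$; note $a(rb a) \cdot$—more cleanly: set $c = arb$, then $(ba)r(ba) = b\,c\,a$, and since $a b = 0$ we have $a(b c a) = (ab)(ca) = 0$, so $bca \in a \cdot 0$? That is not yet of the form $a\,l_R(a)$. The right way: observe $(bca)\,a$-type products won't appear; instead note that $c a \in R$ satisfies $a(c a) $ need not vanish. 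So I would argue differently: since $ab = 0$, for every $r$ we have $a(bra) = (ab)(ra) = 0$, i.e. $bra \in l_R(a)$ read from the right — precisely $a(bra)=0$ means $bra\in r_R(a)$ is wrong; it means $a\cdot(bra)=0$. Hmm. Let me instead use (2) applied to a suitable element: $bra \in l_R(a)$? We need $(bra)a$, not $a(bra)$. Since this is a plan, the clean path is: $ab=0 \Rightarrow (ba)^2=0$ directly? No — $(ba)^2 = b(ab)a = 0$. That IS immediate. So actually $ba$ is itself square-zero whenever $ab=0$, and then I only need the Lemma's equivalence $(1)\Leftrightarrow$ "$r^2=0\Rightarrow r\in\delta(R)$" — but that is what I am trying to reprove, so I must get $r^2=0\Rightarrow r\in\delta(R)$ from (2). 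Given $r^2=0$, then $r\in r_R(r)$ and $r\cdot r = 0$ puts $r = r\cdot 1$ with... use $r\in l_R(r)$: indeed $r\,r=0$ so $r\in l_R(r)$, hence $r = r\cdot e$? No: $r^2 = 0$ means $r\in r_R(r)\cap l_R(r)$, and $r = 1\cdot r$ with $1\notin l_R(r)$ in general. But $r\cdot r = 0$ gives $r\in l_R(r)$, so $r^2 = r\cdot r \in r\,l_R(r) \subseteq \delta(R)$ by (2). Wait, that yields $r^2\in\delta(R)$, trivially true. I actually need $r = r\cdot 1$ rewritten; better: since $r\in l_R(r)$, we have $r = $ an element such that $r\cdot r=0$; then $r^2 = 0$... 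The genuinely correct move: $r\in l_R(r)$ and so $r\cdot r\in r\,l_R(r)$, useless. Instead note $r = r$ and $1\cdot r = r$, so taking "$a=r$" and the annihilator element "$r$" lying in $l_R(r)$ is circular. The right choice is $a=r$, and $r\in r_R(r)$, so by (3)-style reasoning $(r_R(r))r\ni r\cdot r$. Since this circularity is the real issue, the main obstacle is exactly this: extracting $r\in\delta(R)$ from $r^2=0$ using only conclusion (2). The fix is to use that $\delta(R)$ is semiprime: from $r^2=0$, for all $s\in R$, $r s r$ — we have $r(sr)\cdot$ is not zero, but $(rs)r$ with $r\cdot(rs)$... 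Actually $r(srs)r$? I would show $rRr\subseteq\delta(R)$: for $s\in R$, $r s r = r(sr)$; since $r(sr)\cdot 1$... use instead $a := sr$, then $r\cdot a = rsr$ and $r^2=0$ gives $(sr)r = s r r = 0$, so $sr\in l_R(r)$, hence $r(sr) = rsr \in r\,l_R(r)\subseteq\delta(R)$ by (2). Thus $rRr\subseteq\delta(R)$, and semiprimeness of $\delta(R)$ gives $r\in\delta(R)$; combined with the Lemma, (1) follows. The argument $(3)\Rightarrow(1)$ is the mirror image: from $r^2=0$, $r(rs)=r^2 s = 0$ so $rs\in r_R(r)$, whence $(rs)r = rsr\in(r_R(r))r\subseteq\delta(R)$, again $rRr\subseteq\delta(R)$ and semiprimeness finishes it.

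Assembling these gives the equivalence; the only delicate point, as noted, is routing through semiprimeness of $\delta(R)$ rather than naively asserting $ba\in\delta(R)$ directly, so I would state that step carefully and cite Lemma \ref{semiprime} explicitly.
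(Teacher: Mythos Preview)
Your argument is ultimately correct, but it takes a far more roundabout path than the paper does, and the stated ``cyclic chain'' $(1)\Rightarrow(2)\Rightarrow(3)\Rightarrow(1)$ is not what you actually carry out (you end up proving $(1)\Leftrightarrow(2)$ and $(1)\Leftrightarrow(3)$ separately). The paper's proof is pure definition-chasing and uses no auxiliary results at all: for $(1)\Rightarrow(2)$, if $x\in l_R(a)$ then $xa=0$, so $\delta$-reversibility gives $ax\in\delta(R)$ directly; for $(2)\Rightarrow(3)$, if $x\in r_R(a)$ then $ax=0$, so $a\in l_R(x)$, and applying $(2)$ to the element $x$ yields $xa\in x\,l_R(x)\subseteq\delta(R)$; for $(3)\Rightarrow(1)$, $ab=0$ means $b\in r_R(a)$, hence $ba\in r_R(a)\,a\subseteq\delta(R)$.

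By contrast, you route everything through the previous Lemma's square-zero criterion and Lemma~\ref{semiprime} on semiprimeness of $\delta(R)$: you show $(2)$ implies $rRr\subseteq\delta(R)$ whenever $r^2=0$ (via $sr\in l_R(r)$, so $rsr\in r\,l_R(r)$), then invoke semiprimeness to get $r\in\delta(R)$, and finally the Lemma to recover $(1)$. This is valid, and it does illustrate why the semiprimeness of $\delta(R)$ is a natural companion to the square-zero characterization, but it is heavy machinery for a statement that unwinds immediately from the definitions. The long stretch of false starts in your $(2)\Rightarrow(1)$ paragraph is a symptom of having reached for the wrong tool: the moment you notice that $x\in l_R(a)$ literally means $xa=0$, the original definition of $\delta$-reversibility applies verbatim and no detour is needed.
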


\begin{proof}
(1) $\Rightarrow$ (2) Let $a\in R$ and $x\in l_R(a)$. So $xa=0$. As
$R$ is $\delta$-reversible, $ax\in \delta(R)$. Thus we have
$al_R(a)\subseteq \delta(R)$.\\
(2) $\Rightarrow$ (3) Let $x\in r_R(a)$. Hence $a\in l_R(x)$. By (2) $xa\in \delta(R).$ So $r_R(a)a\subseteq \delta(R)$ \\
(3) $\Rightarrow$ (1) For $a,b\in R$ assume that $ab=0$. Then we have $ba\in \delta(R)$ by(3).
\end{proof}

Let $R$ be a ring and $I$ an ideal of $R$. Then $I$ is called $\delta$-reversible, if for any $a,b\in I$, $ab=0$ implies $ba\in \delta(I)$. Then we have the following result.

\begin{prop}
Let $R$ be a ring and $I$ an ideal of $R$. If $R$ is a
$\delta$-reversible ring, then $I$ is $\delta$-reversible.
\end{prop}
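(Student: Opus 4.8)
The plan is to take $a, b \in I$ with $ab = 0$ and deduce $ba \in \delta(I)$, using the hypothesis that $R$ itself is $\delta$-reversible. Since $a, b \in I \subseteq R$ and $ab = 0$ in $R$, $\delta$-reversibility of $R$ immediately gives $ba \in \delta(R)$. Moreover $ba \in I$ because $I$ is an ideal. So the whole content of the proof is the containment
\[
\delta(R) \cap I \subseteq \delta(I),
\]
after which we are done: $ba \in \delta(R) \cap I \subseteq \delta(I)$.

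To establish $\delta(R) \cap I \subseteq \delta(I)$, I would use the socle/Jacobson characterization from Remark 1.2(1), namely $\delta(S)/S_r(S) = J(S/S_r(S))$ for any ring $S$, together with the classical fact that for an ideal $I$ of $R$ one has $J(R) \cap I \subseteq J(I)$ (here $I$ is viewed as a possibly non-unital ring, but the relevant annihilator/quasi-regularity characterization of the Jacobson radical still applies). The first step is to relate $S_r(I)$, the right socle of $I$ as a ring, to $S_r(R) \cap I$: a minimal right ideal of $R$ contained in $I$ is also a minimal right ideal of $I$, and conversely, so that $S_r(R) \cap I \subseteq S_r(I)$. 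Next, passing to quotients, an element $x \in \delta(R) \cap I$ has image $\bar x \in J(R/S_r(R))$; I would push this through the natural map $I/(S_r(R)\cap I) \to R/S_r(R)$ (which is injective) and use $J$-compatibility with ideals to land $\bar x$ in $J(I/S_r(I))$, hence $x \in \delta(I)$ by Remark 1.2(1) applied to $I$.

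The main obstacle is bookkeeping around the fact that $I$ need not have an identity, so the cited characterizations of $\delta$ and $J$ (Theorem 1.1, Remark 1.2) were stated for rings with identity and must be re-examined in the non-unital setting; in particular one must be a little careful that ``minimal right ideal of $I$'' and ``minimal right ideal of $R$ inside $I$'' really do coincide, and that the Jacobson-radical-of-an-ideal inclusion $J(R) \cap I \subseteq J(I)$ is being used in a form valid for ideals rather than unital subrings. An alternative, cleaner route that sidesteps non-unitality issues is to argue directly from the definition of $\delta$-small: show that if $x \in \delta(R) \cap I$ then $xI$ is $\delta$-small in $I$ (if $I = xI + L$ with $I/L$ singular, lift the singularity and use that $xR$ is $\delta$-small in $R$), whence $x \in \delta(I)$ by Theorem 1.1(2) adapted to $I$. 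I would present whichever of these is shortest; the socle route is likely the more transparent given that Remark 1.2(1) is already available in the paper.
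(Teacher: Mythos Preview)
Your outline matches the paper's proof exactly: take $a,b\in I$ with $ab=0$, use $\delta$-reversibility of $R$ to get $ba\in\delta(R)$, and then conclude $ba\in\delta(I)$ from $\delta(R)\cap I\subseteq\delta(I)$. The only difference is that the paper simply invokes the equality $\delta(I)=I\cap\delta(R)$ as a known fact (with $\delta(I)$ understood as the Zhou radical of $I$ as an $R$-module, where the identity for fully invariant submodules is standard from Zhou's work), so your two proposed justifications via socles or via $\delta$-smallness, and your worries about non-unitality, are unnecessary once $\delta(I)$ is read module-theoretically rather than as the radical of a non-unital ring.
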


\begin{proof}
Suppose that $ab=0$ for $a,b\in I$. Then $ba\in \delta(R)$, as $R$
is a $\delta$-reversible ring. Since $\delta(I)=I\cap
\delta(R)$, $ba\in \delta(I)$.
\end{proof}

\begin{prop}
Let $\{R_i\}_{i\in \mathcal I}$ be a family of rings for an
indexed set $\mathcal{I}$. Then $\prod_{i\in \mathcal I}R_i$ is
$\delta$-reversible if and only if so is $R_i$, for any $i\in
\mathcal I$.
\end{prop}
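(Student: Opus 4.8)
The plan is to prove the two implications separately, using the fact that $\delta$ commutes with finite (and arbitrary) direct products of rings, a fact one should first record: for a family $\{R_i\}_{i\in\mathcal I}$ one has $\delta\bigl(\prod_{i\in\mathcal I}R_i\bigr)=\prod_{i\in\mathcal I}\delta(R_i)$. This follows from the characterization in Remark~1.3 via $J(R/S_r)=\delta(R)/S_r$ together with the fact that the Jacobson radical and the right socle both distribute over products; alternatively one can argue directly from part $(1)$ of Theorem~1.2, since the essential maximal right ideals of $\prod R_i$ are exactly the ideals of the form $R_j$ replaced by an essential maximal right ideal of $R_j$ in one coordinate and the whole ring elsewhere. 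I would cite this as a known property (it is the analogue of \cite[Lemma 2.5]{Burcu} used in Example~\ref{tersornek}) rather than reprove it.

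For the forward direction, assume $R:=\prod_{i\in\mathcal I}R_i$ is $\delta$-reversible and fix $j\in\mathcal I$. Take $a,b\in R_j$ with $ab=0$ in $R_j$. Embed them into $R$ via the canonical inclusion $\iota_j\colon R_j\to R$ sending $x$ to the tuple with $x$ in coordinate $j$ and $0$ elsewhere; note $\iota_j$ is a non-unital ring homomorphism, but that is all we need since $\iota_j(a)\iota_j(b)=\iota_j(ab)=0$. Then $\delta$-reversibility of $R$ gives $\iota_j(ba)=\iota_j(b)\iota_j(a)\in\delta(R)=\prod_i\delta(R_i)$, and reading off the $j$-th coordinate yields $ba\in\delta(R_j)$. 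Hence each $R_j$ is $\delta$-reversible.

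For the converse, assume every $R_i$ is $\delta$-reversible and take $a=(a_i)_{i},\,b=(b_i)_i\in R$ with $ab=0$. Then $a_ib_i=0$ in $R_i$ for every $i$, so $b_ia_i\in\delta(R_i)$ for every $i$ by hypothesis, whence $ba=(b_ia_i)_i\in\prod_i\delta(R_i)=\delta(R)$. Therefore $R$ is $\delta$-reversible, completing the proof.

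The only real obstacle is the identity $\delta\bigl(\prod R_i\bigr)=\prod\delta(R_i)$ for an arbitrary (possibly infinite) index set; for finitely many factors it is immediate from \cite[Lemma 2.5]{Burcu}-type reasoning, and in the infinite case one must be a little careful that both $J$ and $S_r$ behave well under infinite products (the socle of an infinite product can fail to be the product of the socles in general, but for the quotient computation $J(R/S_r)$ the argument still goes through because the relevant essential maximal right ideals are exactly the ``coordinate'' ones). If one wants to avoid this subtlety entirely, restricting the statement to a finite index set suffices for all later applications in the paper, and the proof above is unchanged.
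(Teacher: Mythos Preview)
Your proof is correct and follows essentially the same route as the paper's: both directions rest on the identity $\delta\bigl(\prod_{i}R_i\bigr)=\prod_{i}\delta(R_i)$ together with the obvious componentwise argument, and the paper in fact asserts this identity without further comment. You are simply more explicit than the paper---spelling out the forward implication (which the paper dismisses as ``clear'') and flagging the infinite-index subtlety in the product formula---but the underlying argument is the same.
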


\begin{proof}
Assume that $R_i$ is $\delta$-reversible for any $i\in \mathcal I$
and $(x_i)(y_i)=(0)$, for $(x_i),(y_i)\in \prod_{i\in \mathcal
I}R_i$. So $x_iy_i=0$ for every $i\in \mathcal I$. By hypothesis,
we have $y_ix_i\in \delta(R)$,for all $i\in \mathcal I$. Since
$\delta(\prod_{i\in \mathcal I}R_i)=\prod_{i\in \mathcal
I}(\delta(R_i))$, $(y_i)(x_i)\in \delta(\prod_{i\in \mathcal
I}R_i)$, as asserted. The converse is clear.
\end{proof}

\begin{prop}\label{ere}
A ring $R$ is $\delta$-reversible if and only if $eRe$ is
$\delta$-reversible for every idempotent $e\in R$.
\end{prop}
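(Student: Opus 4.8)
The plan: the ``if'' direction is immediate, since $1$ is an idempotent with $1R1=R$, so all the content is in the ``only if'' direction. Assume $R$ is $\delta$-reversible, fix an idempotent $e$, and take $a,b\in eRe$ with $ab=0$ (the product in $eRe$ being the restriction of the product of $R$). Then $\delta$-reversibility of $R$ gives $ba\in\delta(R)$, and since also $ba\in eRe$ and $\delta(R)$ is a two-sided ideal, $ba\in\delta(R)\cap eRe=e\,\delta(R)\,e$. Thus everything reduces to proving
\[
e\,\delta(R)\,e\subseteq\delta(eRe),
\]
after which $ba\in\delta(eRe)$ and $eRe$ is $\delta$-reversible. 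I stress that the reverse inclusion --- the full corner identity $\delta(eRe)=e\,\delta(R)\,e$ --- is false in general (already for $R=T_2(k)$ and $e=E_{11}$, where the left side is $0$ and the right side is $k$), so only the displayed inclusion is available.

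To prove the inclusion I would use characterization (3) of $\delta(R)$ from the theorem of \cite{Zhou} recalled at the start of this section, i.e. $\delta(R)=\{x\in R:xR+K=R\text{ implies }K\text{ is a direct summand of }R_R\}$. Fix $x\in\delta(R)$; the goal is $exe\in\delta(eRe)$. So let $K'$ be a right ideal of $eRe$ with $(exe)(eRe)+K'=eRe$, and aim to show $K'$ is a direct summand of $(eRe)_{eRe}$. Inflate $K'$ to the right ideal $K:=K'R+(1-e)R$ of $R$. Writing $e=(exe)w+k'$ with $w\in eRe$ and $k'\in K'$, one computes $(exe)w=e(xw)=xw-(1-e)(xw)\in xR+(1-e)R$, so $e\in xR+K$; together with $1-e\in K$ this gives $xR+K=R$. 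Hence $x\in\delta(R)$ forces $K$ to be a direct summand of $R_R$: write $R=gR\oplus(1-g)R$ with $g^2=g$ and $gR=K$.

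The last step --- deflating this decomposition back to $eRe$ --- is where I expect the real difficulty, since multiplying $R=gR\oplus(1-g)R$ through by $e$ does not by itself yield an $eRe$-module decomposition ($e$ need not commute with $g$). The crucial observation is that $(1-e)R\subseteq K=gR$ forces $(1-g)(1-e)=0$, that is $ge=g-1+e$; from this $ege=eg$ and $(eg)^2=eg$, so $h:=eg=ege$ is an idempotent of $eRe$. One then checks $h(eRe)=eKe=K'$ (the identity $eKe=K'$ coming from $K=K'R+(1-e)R$ and $K'$ being a right $eRe$-ideal), so $K'=h(eRe)$ is a direct summand of $(eRe)_{eRe}$ with complement $(e-h)(eRe)$, giving $exe\in\delta(eRe)$. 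This finishes the inclusion, hence the proposition. Everything other than this deflation --- the $e=1$ case, the reduction to the inclusion, the inflation $K'\mapsto K'R+(1-e)R$, and the check $xR+K=R$ --- is routine; spotting the idempotent $h=eg$ and the relation $ge=g-1+e$ is the one genuinely delicate point.
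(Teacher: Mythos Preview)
Your argument is correct and, in fact, more careful than the paper's. Both proofs follow the same outline: if $R$ is $\delta$-reversible and $(eae)(ebe)=0$ in $eRe$, then $(ebe)(eae)\in\delta(R)\cap eRe=e\,\delta(R)\,e$, and one must then land in $\delta(eRe)$. The paper simply writes $e\,\delta(R)\,e=\delta(eRe)$ and moves on; you instead prove the inclusion $e\,\delta(R)\,e\subseteq\delta(eRe)$ directly from Zhou's summand characterization. Your caution is warranted: the full equality the paper asserts does \emph{not} hold in general, as your $T_2(k)$ example shows --- though you have the two sides swapped: for $e=E_{11}$ one finds $\delta(eRe)\cong\delta(k)=k$ while $e\,\delta(R)\,e=0$, so it is the reverse inclusion $\delta(eRe)\subseteq e\,\delta(R)\,e$ that fails, exactly as you intend.

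Your deflation step is the substantive addition and it is correct: from $(1-e)R\subseteq K=gR$ one gets $(1-g)(1-e)=0$, hence $ege=eg$ is an idempotent $h$ of $eRe$; then $h(eRe)\subseteq eKe$ and $eKe=e\bigl(K'R+(1-e)R\bigr)e=K'Re=K'$ (using that $K'$ is a right $eRe$-ideal and $K'=K'e$), while conversely each $k'\in K'$ satisfies $k'=gk'=egk'=hk'\in h(eRe)$. Thus $K'=h(eRe)$ is a summand, giving $exe\in\delta(eRe)$. In short, your proof follows the paper's strategy but supplies a genuine justification where the paper invokes an identity that is false as stated.
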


\begin{proof}
Let $a,b\in R$. Assume that $e^2=e\in R$ and $(eae)(ebe)=0$. Then $(ebe)(eae)\in
\delta(R)$ since $R$ is a $\delta$-reversible ring. Therefore
$(ebe)(eae)\in e\delta(R)e=\delta(eRe)$, as desired. The
sufficiency of proof is clear for $e=1$.
\end{proof}

Let  $\delta^{\#}(R)$ denote the subset $\{ x\in R ~|~ \exists ~n\in
{\mathbb N}~\mbox{such that}~x^n\in \delta(R)\}$ of $R$. It is obvious that $\delta(R)\subseteq\delta^{\#}(R)$, but the converse doesn't hold for general. Next example explains that.
\begin{ex}
    It is known that $\delta(M_2(\mathbb Z))=\left\{ \left[
\begin{array}{cc}
0&0\\
0&0
\end{array}
\right]\right\}$. Therefore\\
$A=\left[
\begin{array}{cc}
0&1\\
0&0
\end{array}
\right]\notin \delta(R)$, but $A^2=0\in \delta(R)$, so $A\in \delta^{\#}(R)$.
\end{ex} 
\begin{cor}
    If $R$ is a local ring, then $\delta^{\#}(R)=\delta(R)$.
\end{cor}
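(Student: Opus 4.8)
The statement to prove is that if $R$ is a local ring, then $\delta^{\#}(R)=\delta(R)$. Since the inclusion $\delta(R)\subseteq\delta^{\#}(R)$ is noted to hold in general, the work is entirely in the reverse inclusion: given $x\in R$ with $x^n\in\delta(R)$ for some $n\in\mathbb{N}$, I want to conclude $x\in\delta(R)$. The plan is to exploit the fact that in a local ring the Jacobson radical $J(R)$ consists precisely of the non-units, and to compare $\delta(R)$ with $J(R)$.

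First I would recall that $J(R)\subseteq\delta(R)$ always (Remark after Theorem 2.1), and that for a local ring $J(R)$ is a maximal ideal with $R/J(R)$ a division ring, so every element of $R$ is either a unit or lies in $J(R)$. Now take $x$ with $x^n\in\delta(R)$. The key dichotomy: if $x$ is a unit, then $x^n$ is a unit, so $x^n\notin\delta(R)$ unless $\delta(R)=R$; and $\delta(R)=R$ forces $R$ to be semisimple (by the Remark), which for a local ring means $R$ is a division ring, in which case $\delta(R)=R$ and the claim is trivial. So in the nontrivial case $x$ cannot be a unit, hence $x\in J(R)\subseteq\delta(R)$, which is exactly what we want.

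The main point to get right — and the only place any care is needed — is handling the degenerate case $\delta(R)=R$ cleanly, and making sure the argument "$x^n$ a unit implies $x^n\notin\delta(R)$" is justified: if $\delta(R)$ were a proper ideal containing a unit it would equal $R$, contradiction, so a proper $\delta(R)$ contains no units. I would also double-check that the local hypothesis is used only through "non-units form the ideal $J(R)$"; no deeper structure theory is needed. I expect this to be short: the whole proof is essentially the observation that in a local ring a power of a non-unit is a non-unit and a power of a unit is a unit, combined with $J(R)\subseteq\delta(R)\subsetneq R$ (or $\delta(R)=R$ trivially).

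One alternative phrasing, which avoids splitting into cases, is: suppose $x\notin\delta(R)$; since $J(R)\subseteq\delta(R)$ we get $x\notin J(R)$, so $x$ is a unit because $R$ is local; then $x^n$ is a unit, so $x^n$ cannot lie in the proper ideal $\delta(R)$ — and if $\delta(R)$ is not proper then $x\in\delta(R)$ anyway — contradicting $x^n\in\delta(R)$. Either way the argument is a one-paragraph contrapositive, and I do not anticipate any real obstacle.
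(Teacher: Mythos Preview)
Your proposal is correct and follows essentially the same approach as the paper: split on whether $x$ is a unit, use that a unit power forces $\delta(R)=R$, and otherwise use locality to get $x\in J(R)\subseteq\delta(R)$. Your treatment of the degenerate case $\delta(R)=R$ is in fact more careful than the paper's, but the argument is the same.
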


\begin{proof}
It is clear that $\delta(R)\subseteq\delta^{\#}(R)$. For the converse, let $x\in \delta^{\#}(R)$. If $x\in U(R)$, then $x^n\in \delta(R)$. But since $x^n$ is unit, $\delta(R)=R$. If $x\notin U(R)$, then $x\in J(R)\subseteq \delta(R)$. So proof is completed.
\end{proof}

Now we have the following.

\begin{prop}
    Let $R$ be a ring. If $\delta^{\#}(R)=\delta(R)$, then $R$ is $\delta$-reversible.
\end{prop}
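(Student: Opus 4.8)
The plan is to use the criterion from the earlier Lemma: $R$ is $\delta$-reversible if and only if every square-zero element lies in $\delta(R)$. So it suffices to take $r\in R$ with $r^2=0$ and show $r\in\delta(R)$. But $r^2=0\in\delta(R)$ immediately gives $r\in\delta^{\#}(R)$ by definition of $\delta^{\#}(R)$, and by hypothesis $\delta^{\#}(R)=\delta(R)$, hence $r\in\delta(R)$. This is essentially a one-line argument once the right Lemma is invoked.

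More explicitly, I would write: suppose $\delta^{\#}(R)=\delta(R)$ and let $a,b\in R$ with $ab=0$. Then $(ba)^2 = b(ab)a = 0$, so $(ba)^2\in\delta(R)$, which means $ba\in\delta^{\#}(R)=\delta(R)$. Therefore $R$ is $\delta$-reversible. This direct route avoids even citing the square-zero Lemma, though either phrasing works; I would probably present the direct $(ba)^2=0$ computation since it is self-contained and transparent.

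There is essentially no obstacle here — the statement is an immediate consequence of the definition of $\delta^{\#}(R)$ combined with the observation that $ab=0$ forces $(ba)^2=0$. The only thing to be careful about is making sure the reader sees why $(ba)^2=0$: it is because $(ba)(ba)=b(ab)a=b\cdot 0\cdot a=0$. I would state this computation explicitly in one line and conclude.

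Finally, I note that the converse fails in general: a $\delta$-reversible ring need not satisfy $\delta^{\#}(R)=\delta(R)$, as seen for instance by taking a reduced (hence reversible, hence $\delta$-reversible) ring $R$ in which $\delta(R)$ is not closed under the "eventually in $\delta(R)$" operation — though for the purposes of this proposition only the stated implication is needed, so I would not belabor the converse.
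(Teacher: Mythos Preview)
Your direct argument --- from $ab=0$ conclude $(ba)^2=b(ab)a=0$, hence $ba\in\delta^{\#}(R)=\delta(R)$ --- is correct and is exactly the paper's proof. The only extraneous part is the final paragraph about the converse, which is not needed (and your sketched counterexample is not quite right, since in a reduced ring $\delta^{\#}(R)=\delta(R)$ automatically); you can simply drop that remark.
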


\begin{proof}
    Assume that $\delta^{\#}(R)=\delta(R)$ and $ab=0$ for any $a,b\in R$. Since $(ba)^2=0$, $ba\in \delta^{\#}(R)=\delta(R)$ and this completes the proof.
\end{proof}

\begin{cor}
    Let $R$ be a ring. If $R/\delta(R)$ is reduced, then $R$ is $\delta$-reversible.
\end{cor}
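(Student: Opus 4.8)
The plan is to deduce this directly from the preceding Proposition, by showing that the hypothesis ``$R/\delta(R)$ is reduced'' forces the equality $\delta^{\#}(R)=\delta(R)$. Once that equality is established, the previous Proposition applies verbatim and gives that $R$ is $\delta$-reversible.

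First I would recall the trivial inclusion $\delta(R)\subseteq\delta^{\#}(R)$, which was already noted when $\delta^{\#}(R)$ was introduced. For the reverse inclusion, take $x\in\delta^{\#}(R)$, so that $x^n\in\delta(R)$ for some $n\in\mathbb{N}$. Passing to the quotient $\overline{R}=R/\delta(R)$, this says $\overline{x}^{\,n}=\overline{0}$, i.e.\ $\overline{x}$ is a nilpotent element of $\overline{R}$. Since $\overline{R}$ is reduced by hypothesis, it has no nonzero nilpotents, hence $\overline{x}=\overline{0}$, that is $x\in\delta(R)$. This proves $\delta^{\#}(R)\subseteq\delta(R)$, and therefore $\delta^{\#}(R)=\delta(R)$.

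Having $\delta^{\#}(R)=\delta(R)$, I would simply invoke the previous Proposition to conclude that $R$ is $\delta$-reversible. (Alternatively, one can argue directly without naming $\delta^{\#}(R)$: if $ab=0$ for $a,b\in R$, then $(ba)^2=b(ab)a=0$, so $\overline{ba}$ is nilpotent in the reduced ring $R/\delta(R)$, whence $\overline{ba}=\overline{0}$ and $ba\in\delta(R)$.)

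I do not expect any real obstacle here, since the statement is an immediate consequence of the previous Proposition together with the defining property of a reduced ring; the only thing to be careful about is the direction of the quotient argument, namely that a nilpotent lifting to an element whose suitable power lies in $\delta(R)$ corresponds exactly to a nilpotent of $R/\delta(R)$.
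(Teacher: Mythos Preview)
Your proposal is correct and follows essentially the same route as the paper: both argue that if $R/\delta(R)$ is reduced then any $x\in\delta^{\#}(R)$ has $\overline{x}$ nilpotent in $R/\delta(R)$, hence $x\in\delta(R)$, giving $\delta^{\#}(R)=\delta(R)$ and then invoking the preceding Proposition. Your write-up is in fact slightly more explicit about the final invocation than the paper's.
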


\begin{proof}
    Let $R/\delta(R)$ is reduced and $x\in \delta^{\#}(R)$. Then for an $n\in \mathbb{N}$, $x^n\in \delta(R)$. So $\overline{x^n}=\overline{0}$ and since $R/\delta(R)$  reduced, $\overline{x}=\overline{0}$ and this implies $x\in \delta(R)$, as asserted.
\end{proof}

\begin{defn}
Let $R$ be a ring, $f(x)=a_0+a_1x+\cdots+a_nx^n\in R[x]$ and
$g(x)=b_0+b_1x+\cdots+b_mx^m\in R[x]$. Then $R$ is
$\delta$-Armendariz, if $f(x)g(x)=0$, then $a_ib_j\in \delta(R)$
for any $0\leq i\leq n$, $0\leq j\leq m$. Furthermore, $R$ is
called $\delta$-linear Armendariz, if for $f(x)=a_0+a_1x$ and
$g(x)=b_0+b_1x$ being $f(x)g(x)=0$ implies that $a_ib_j\in
\delta(R)$ for any $0\leq i,j \leq 1$.
\end{defn}

\begin{prop}
Let $R$ be a ring. If $R$ is $\delta$-reversible, then it is
$\delta$-linear Armendariz.
\end{prop}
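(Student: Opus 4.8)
The plan is to write $f(x)=a_0+a_1x$, $g(x)=b_0+b_1x$, expand $f(x)g(x)=0$ into the three coefficient relations $a_0b_0=0$, $a_0b_1+a_1b_0=0$, $a_1b_1=0$, and then push the mixed coefficient into $\delta(R)$. The outer two relations already give $a_0b_0=a_1b_1=0\in\delta(R)$, so the whole task reduces to showing $a_1b_0\in\delta(R)$; since $\delta(R)$ is a two-sided ideal (in particular an additive subgroup), it will then follow that $a_0b_1=-a_1b_0\in\delta(R)$ as well, which is all that is required.

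First I would squeeze a bit more out of $a_0b_0=0$: for every $r\in R$ we have $a_0(b_0r)=(a_0b_0)r=0$, so $\delta$-reversibility gives $b_0ra_0\in\delta(R)$. Multiplying the middle relation $a_0b_1+a_1b_0=0$ on the left by $b_0r$ yields $b_0ra_0b_1+b_0ra_1b_0=0$; the first summand lies in $\delta(R)$ because $b_0ra_0\in\delta(R)$ and $\delta(R)$ is an ideal, and hence $b_0ra_1b_0=-b_0ra_0b_1\in\delta(R)$ for all $r\in R$. Writing $u=a_1b_0$, this says $b_0Ru\subseteq\delta(R)$, and therefore $uRu=a_1(b_0Ra_1b_0)\subseteq a_1\delta(R)\subseteq\delta(R)$.

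At this point I invoke Lemma \ref{semiprime}: $\delta(R)$ is a semiprime ideal, so $uRu\subseteq\delta(R)$ forces $u=a_1b_0\in\delta(R)$. Combined with $a_0b_0=a_1b_1=0\in\delta(R)$ and $a_0b_1=-a_1b_0\in\delta(R)$, this establishes $a_ib_j\in\delta(R)$ for all $0\le i,j\le 1$, i.e.\ $R$ is $\delta$-linear Armendariz.

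The one genuinely delicate point is the mixed coefficient: $\delta$-reversibility only compares $xy$ with $yx$, whereas $a_0b_1$ and $a_1b_0$ occur in opposite orders, so neither is in direct range of the hypothesis. The device that overcomes this is to manufacture an inclusion of the shape $uRu\subseteq\delta(R)$ for $u=a_1b_0$ — using the relation $a_0b_1+a_1b_0=0$ together with what we extracted from $a_0b_0=0$ — and then appeal to semiprimeness of $\delta(R)$; I would deliberately avoid trying to argue that $R/\delta(R)$ is reduced, since that is not available for arbitrary $\delta$-reversible rings.
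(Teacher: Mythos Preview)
Your proof is correct and follows essentially the same route as the paper's: both exploit $a_0b_0=0$ together with $\delta$-reversibility to obtain $b_0Ra_0\subseteq\delta(R)$, combine this with the middle relation $a_0b_1+a_1b_0=0$ to produce an inclusion of the form $uRu\subseteq\delta(R)$ for a mixed coefficient $u$, and finish via Lemma~\ref{semiprime}. The only cosmetic difference is that the paper multiplies the middle relation on the right by $ra_0$ and first extracts $a_0b_1\in\delta(R)$, whereas you multiply on the left by $b_0r$ and first extract $a_1b_0\in\delta(R)$; in either order the remaining mixed term drops out immediately from $a_0b_1=-a_1b_0$.
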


\begin{proof}
Assume that $f(x)g(x)=0$ for $f(x)=a_0+a_1x\in R[x]$ and
$g(x)=b_0+b_1x\in R[x]$. Then we have
$a_0b_0+(a_0b_1+a_1b_0)x+a_1b_1x^2=0$. So $a_0b_0=0$ and
$a_1b_1=0$. Thus
$b_0a_0\in \delta(R)$ and $b_1a_1\in \delta(R)$ since $R$ is
$\delta$-reversible ring. It is easy to show that for every $r\in R$, $a_0b_0r=0$. Then $b_0 r a_0 \in \delta (R)$. It is clear that
$(a_0b_1+a_1b_0)ra_0=a_0b_1ra_0+a_1b_0ra_0=0$. As $b_0ra_0\in
\delta(R)$, we have $a_0b_1ra_0\in \delta(R)$. Thus $a_0b_1ra_0b_1 \in \delta(R)$ By Lemma
\ref{semiprime}, $a_0b_1\in \delta(R)$. Via similar argument, one can show that $a_1b_0\in
\delta(R)$. Hence $R$ is $\delta$-linear Armendariz.
\end{proof}

Recall that a ring $R$ is called $\delta$-clean if for every $x\in R$, there exist an idempotent $e$ and $d\in \delta(R)$ such that $a=e+d$, \cite{ozcangurgun}. In this direction we have the following.

\begin{thm}\label{deltaclean}
    Every $\delta$-clean ring is $\delta$-reversible.
\end{thm}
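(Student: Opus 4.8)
The plan is to use the characterization from the earlier Lemma: $R$ is $\delta$-reversible if and only if every square-zero element of $R$ lies in $\delta(R)$. So it suffices to take $r \in R$ with $r^2 = 0$ and show $r \in \delta(R)$.

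First I would invoke the $\delta$-clean hypothesis to write $r = e + d$ with $e^2 = e$ and $d \in \delta(R)$. Since $\delta(R)$ is an ideal, it is enough to show that $e \in \delta(R)$; indeed, then $r = e + d \in \delta(R)$. To get at $e$, I would compute modulo $\delta(R)$: passing to $\bar R = R/\delta(R)$, we have $\bar r = \bar e$, so $\bar e = \bar e^2 = \bar r^2 = \bar 0$. Thus $e \in \delta(R)$, and we are done — $r \in \delta(R)$.

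Actually, let me reconsider whether this is quite that short, because the one subtlety is that $\bar r = \bar e$ only tells us $\bar e$ is an idempotent that happens to equal $\bar r$; squaring gives $\bar e = \bar r^2 = \overline{r^2} = \bar 0$ directly, so indeed $e \in \delta(R)$. So the argument genuinely reduces to: square-zero element $=$ idempotent $+$ element of $\delta(R)$, and reducing mod $\delta(R)$ forces the idempotent part into $\delta(R)$ as well. I expect the main (and only) potential obstacle is confirming that the earlier Lemma's equivalence (square-zero elements lie in $\delta(R)$) is available and correctly stated; everything else is a two-line verification using that $\delta(R)$ is an ideal and the definition of $\delta$-clean.

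So concretely the write-up would be: let $r^2 = 0$; write $r = e + d$ with $e^2 = e$, $d \in \delta(R)$; then in $R/\delta(R)$ we get $\bar e = \bar r$, hence $\bar e = \bar e^2 = \bar r^2 = \bar 0$, so $e \in \delta(R)$ and therefore $r = e + d \in \delta(R)$; by the Lemma, $R$ is $\delta$-reversible.
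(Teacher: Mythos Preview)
Your proof is correct and is essentially the same as the paper's: both write a square-zero element (the paper takes $ba$ with $ab=0$ directly, you invoke the Lemma to reduce to an arbitrary $r$ with $r^2=0$) as $e+d$ with $e$ idempotent and $d\in\delta(R)$, and then show $e\in\delta(R)$. The only cosmetic difference is that the paper expands $(e+d)^2=0$ explicitly to get $e=-ed-de-d^2\in\delta(R)$, whereas you pass to $R/\delta(R)$ and observe $\bar e=\bar r$ so $\bar e=\bar e^2=\bar r^2=\bar 0$; these are the same computation.
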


\begin{proof}
    Let $a,b\in R$ and $ab=0$. Then $(ba)^2=0$ and since $R$ is $\delta$-clean there exist $e^2=e\in R$ and $d\in \delta(R)$ such that $ba=e+d$. So $(e+d)^2=e+ed+de+d^2=0$ it implies that $e+d+ed+de+d^2=ba+ed+de+d^2=d$. So $ba=d-ed-de-d^2\in \delta(R)$ as asserted.
\end{proof}

\begin{lem}\label{deltaquasi}
Let $R$ be a $\delta$-quasipolar ring. If $a\in nil(R)$, then
$\delta$-spectral idempotent of $a$ is contained in  $\delta(R)$.
\end{lem}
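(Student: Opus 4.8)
The plan is to pin down the $\delta$-spectral idempotent $p$ of $a$ by compressing to the corner rings $(1-p)R(1-p)$ and $pRp$, using the elementary fact that a nonzero ring with identity has no nilpotent units. Write $p$ for the $\delta$-spectral idempotent of $a$, so that $p=p^{2}$ lies in the double commutant of $a$, $a+p\in U(R)$ and $ap\in\delta(R)$. Since $a$ commutes with itself, membership of $p$ in the double commutant of $a$ forces $ap=pa$; hence $a$ commutes with $e:=1-p$, and, putting $u:=a+p\in U(R)$, we get $up=ap+p=pa+p=pu$, so $u$ commutes with $e$ too.

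First I would compress to the corner ring $(1-p)R(1-p)$, whose identity is $e=1-p$. Since $u$ is a unit of $R$ commuting with $e$, the element $ue$ is a unit of $eRe$ (with inverse $u^{-1}e$). On the other hand
\[
ue=(a+p)(1-p)=a(1-p)+p-p^{2}=a(1-p),
\]
so $a(1-p)$ is a unit of $(1-p)R(1-p)$. Because $a$ is nilpotent and commutes with $1-p$, we have $\bigl(a(1-p)\bigr)^{k}=a^{k}(1-p)$ for every $k$, so $a^{n}=0$ forces $\bigl(a(1-p)\bigr)^{n}=0$. Thus $a(1-p)$ is simultaneously a unit and a nilpotent element of the ring $(1-p)R(1-p)$, which is impossible unless $(1-p)R(1-p)=0$, i.e.\ unless $1-p=0$; hence $p=1$. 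Then $ap\in\delta(R)$ becomes $a=a\cdot 1=ap\in\delta(R)$, so the $\delta$-spectral idempotent of $a$ is $1$ and $a$ itself lies in $\delta(R)$ — exactly what is needed afterwards (combined with $ab=0\Rightarrow(ba)^{2}=0$ this yields that $\delta$-quasipolar rings are $\delta$-reversible).

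If one prefers to work instead inside $pRp$ — which is natural if the $\delta$-quasipolar condition is recorded in the shifted form $1-a-p\in U(R)$ — the conclusion $p\in\delta(R)$ comes out directly: the element $v:=1-a-p$ commutes with $p$, so its compression $pvp=p-pap-p=-ap$ is a unit of $pRp$ (inverse $pv^{-1}p$), while at the same time $ap=pap\in p\,\delta(R)\,p=\delta(pRp)$ by the corner-ring identity for the Zhou radical that is already used in the proof of Proposition~\ref{ere}. An ideal of a ring that contains a unit is the whole ring, hence $\delta(pRp)=pRp$, and therefore $p\in pRp=\delta(pRp)\subseteq\delta(R)$.

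I expect the only part needing care to be bookkeeping: keeping track of which identity (either $1-p$ or $p$) the word ``unit'' refers to after passing to a corner ring, and making sure that every commutation relation invoked ($ap=pa$, $ue=eu$, $(1-a-p)p=p(1-a-p)$, and so on) descends from the single relation $ap=pa$ supplied by the double-commutant hypothesis. Beyond this, the proof uses nothing more than the identity $\delta(eRe)=e\,\delta(R)\,e$ for an idempotent $e$ — exactly as in Proposition~\ref{ere} — together with the fact that a nonzero ring with identity cannot contain a nilpotent unit.
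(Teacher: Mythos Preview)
Your argument rests on a definition of $\delta$-quasipolar that is not the one used in this paper. Following the $J$-quasipolar template of \cite{Tugce}, here $a$ is $\delta$-quasipolar when there exists $p=p^{2}\in comm^{2}(a)$ with $a+p\in\delta(R)$; there is no hypothesis that $a+p\in U(R)$, nor that $ap\in\delta(R)$. With the invertibility assumption you impose, your first paragraph does force $1-p=0$, but then the lemma's conclusion ``$p\in\delta(R)$'' would read $1\in\delta(R)$ --- i.e.\ $R$ is semisimple --- which is certainly not what is being asserted; you drift instead to ``$a\in\delta(R)$'', a different statement from the lemma. Your alternative paragraph likewise leans on the unavailable hypotheses $1-a-p\in U(R)$ and $ap\in\delta(R)$, so neither route proves the stated result.

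That said, the corner-ring idea can be salvaged once the correct hypothesis is fed in, and it then yields a proof shorter than the paper's. From $a+p\in\delta(R)$ and $\delta(pRp)=p\,\delta(R)\,p$ (the identity used in Proposition~\ref{ere}) one gets $p(a+p)p=ap+p\in\delta(pRp)$. Since $a$ is nilpotent and commutes with $p$, the element $ap$ is nilpotent in $pRp$, so $p+ap$ is a unit there; a unit lying in the ideal $\delta(pRp)$ forces $\delta(pRp)=pRp$, whence $p\in p\,\delta(R)\,p\subseteq\delta(R)$. The paper instead expands $(a+p)^{n}\in\delta(R)$ binomially and inductively strips off the summands $a^{n-1}p,\,a^{n-2}p,\dots$ until only $p\in\delta(R)$ remains.
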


\begin{proof}
Assume that $R$ is $\delta$-quasipolar ring and $a^n=0$ for $n\in
\mathbb Z^+$. Then there exists $p^2=p\in comm^2(a)$ such that
$a+p\in \delta(R)$. We have $(a+p)^n=a^n+\left(%
\begin{array}{c}
  n \\
  1 \\
\end{array}%
\right)a^{n-1}p+\left(%
\begin{array}{c}
  n \\
  2 \\
\end{array}%
\right)a^{n-2}p+\cdots +\left(%
\begin{array}{c}
  n \\
  n-2 \\
\end{array}%
\right)a^2p+\left(%
\begin{array}{c}
  n \\
  n-1 \\
\end{array}%
\right)ap+p\in \delta(R)$, as $comm^2(a)\subseteq comm(a)$. Then
$a^{n-1}(a+p)^n=a^{n-1}p\in \delta(R)$. So $(a+p)^n-a^{n-1}p=\left(%
\begin{array}{c}
  n \\
  2 \\
\end{array}%
\right)a^{n-2}p+\cdots+p\in \delta(R)$. If the last equation is
multiplied by $a^{n-3}$, we conclude that $n a^{n-1}p+a^{n-2}p\in
\delta(R)$. Then $n a^{n-1}p+a^{n-2}p-n a^{n-1}p=a^{n-2}p\in
\delta(R)$, as $a^{n-1}p\in \delta(R)$. Hence $\left(%
\begin{array}{c}
  n \\
  2 \\
\end{array}%
\right)a^{n-2}p+\cdots+p-\left(%
\begin{array}{c}
  n \\
  2 \\
\end{array}%
\right)a^{n-2}p=\left(%
\begin{array}{c}
  n \\
  3 \\
\end{array}%
\right)a^{n-3}p+\cdots+\left(%
\begin{array}{c}
  n \\
  n-1 \\
\end{array}%
\right)ap+p\in \delta(R)$. If we continue similarly, we have $p\in
\delta(R)$.
\end{proof}

\begin{cor}\label{quasi-revers}
Every $\delta$-quasipolar ring is $\delta$-reversible.
\end{cor}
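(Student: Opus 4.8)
The plan is to reduce the statement to the nilpotent case handled by Lemma~\ref{deltaquasi}. Recall that a ring $R$ is $\delta$-reversible precisely when $r^2 = 0$ forces $r \in \delta(R)$ (this is the equivalence (1)$\Leftrightarrow$(2) of the characterization Lemma in the previous section). So the entire task is: given a $\delta$-quasipolar ring $R$ and an element $r$ with $r^2 = 0$, show $r \in \delta(R)$.

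First I would invoke the defining property of $\delta$-quasipolar rings: since $r$ is certainly an element of $R$, there exists an idempotent $p \in \mathrm{comm}^2(r)$ with $r + p \in \delta(R)$. Now $r$ is nilpotent (indeed $r^2 = 0$, so $r \in \mathrm{nil}(R)$), so Lemma~\ref{deltaquasi} applies directly and tells us that the $\delta$-spectral idempotent $p$ of $r$ lies in $\delta(R)$. Once we have $p \in \delta(R)$ in hand, we combine it with $r + p \in \delta(R)$: since $\delta(R)$ is an ideal (in particular an additive subgroup), $r = (r+p) - p \in \delta(R)$. This shows $r^2 = 0 \Rightarrow r \in \delta(R)$, which by the cited characterization is exactly $\delta$-reversibility of $R$.

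The only step requiring care is confirming that the hypothesis of Lemma~\ref{deltaquasi} is met, namely that $r \in \mathrm{nil}(R)$ and that the $p$ produced by $\delta$-quasipolarity is the spectral idempotent referred to in the lemma — but this is immediate from the definition of $\delta$-quasipolar, since that is precisely the idempotent $p \in \mathrm{comm}^2(r)$ with $r + p \in \delta(R)$. So there is essentially no obstacle; the substantive work has already been done in Lemma~\ref{deltaquasi}, and this corollary is just the packaging: translate $\delta$-reversibility into the square-zero condition, hand the square-zero (hence nilpotent) element to the lemma, and add the two elements of the ideal $\delta(R)$.
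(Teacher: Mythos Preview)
Your proof is correct and follows essentially the same route as the paper's argument: the paper takes $ab=0$, notes $(ba)^2=0$, applies $\delta$-quasipolarity to $ba$ to obtain $p\in\mathrm{comm}^2(ba)$ with $ba+p\in\delta(R)$, invokes Lemma~\ref{deltaquasi} to get $p\in\delta(R)$, and concludes $ba\in\delta(R)$ --- which is exactly your argument with $r=ba$, except that you route through the square-zero characterization lemma rather than the definition directly. The paper additionally remarks that the result is an immediate consequence of Theorem~\ref{deltaclean}, but the explicit proof given matches yours.
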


\begin{proof}
It is a direct consequence of Theorem \ref{deltaclean}. To show the synergy of ideas, here is another proof. Let $R$ be a $\delta$-quasipolar ring and $ab=0$ for $a,b\in R$.
Then $(ba)^2=0$. There exists $p^2=p\in comm^2(ba)$ such that
$ba+p\in \delta(R)$, by $\delta$-quasipolarity of $R$. By Lemma
\ref{deltaquasi}, $p\in \delta(R)$. Thus $ba\in \delta(R)$.
\end{proof}

The following example shows that the converse of Corollary \ref{quasi-revers} is not satisfied in general.

\begin{ex}
Let $R=\{(q_1,q_2,\ldots,q_n,a,a,\ldots):~n\geq 1,~q_i\in \mathbb Q,
a\in \mathbb Z_2\}$. $R$ is a $\delta$-reversible ring since it is
commutative. But $R$ is not $\delta$-quasipolar by \cite[Example
5]{Tugce}.
\end{ex}

\section{Some Extensions of $\delta$-Reversible Rings}

In this section, we study ring extensions in terms of $\delta$-reversibility.

\begin{lem}\cite[Proposition 3.15]{ozcangurgun}\label{polinomial}
Let $R$ be a ring. Then $\delta(R[[x]])\subseteq \delta(R)+\langle x\rangle$.
\end{lem}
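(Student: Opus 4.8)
The plan is to use the description of the Zhou radical given in the theorem quoted at the beginning of Section~2: $\delta(R)$ equals $R_1$, the intersection of all essential maximal right ideals of $R$. Put $S=R[[x]]$. Since $\langle x\rangle$ is an ideal of $S$ with $S/\langle x\rangle\cong R$ via $g=\sum_i b_ix^i\mapsto b_0$, it suffices to prove that every $f=a_0+a_1x+a_2x^2+\cdots\in\delta(S)$ satisfies $a_0\in\delta(R)$; then $f=a_0+(f-a_0)\in\delta(R)+\langle x\rangle$. Using $\delta(R)=R_1$, this in turn reduces to showing $a_0\in M$ for an arbitrary essential maximal right ideal $M$ of $R$ (if no such $M$ exists then $\delta(R)=R$ and there is nothing to prove).

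The core step is to lift $M$ to an essential maximal right ideal $\widetilde M:=M+\langle x\rangle=\{\,g=\sum_i b_ix^i\in S:\ b_0\in M\,\}$ of $S$. It is a right ideal because the constant term of a product $gh$ is the product of the constant terms of $g$ and $h$, and $M$ is a right ideal of $R$. It is essential: $\langle x\rangle$ is already essential in $S$, since for any nonzero right ideal $K$ of $S$ and any $0\neq g\in K$ one has $0\neq gx\in K\cap\langle x\rangle$ by injectivity of multiplication by $x$ on $S$; and $\widetilde M\supseteq\langle x\rangle$. Finally $\widetilde M$ is maximal: the right ideals of $S$ containing $\langle x\rangle$ correspond bijectively to the right ideals of $S/\langle x\rangle\cong R$, and under this correspondence $\widetilde M$ matches the maximal right ideal $M$, so the only right ideal of $S$ properly containing $\widetilde M$ is $S$.

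Granting these three facts, $f\in\delta(S)$ lies in every essential maximal right ideal of $S$, in particular in $\widetilde M$, hence $a_0\in M$. Letting $M$ range over all essential maximal right ideals of $R$ yields $a_0\in\delta(R)$, and therefore $\delta(R[[x]])\subseteq\delta(R)+\langle x\rangle$.

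I expect the only genuinely delicate points to be the two structural assertions about $\widetilde M$: that $\langle x\rangle$ is essential in $R[[x]]$ (which rests on multiplication by $x$ being injective) and that $\widetilde M$ is maximal --- the latter is cleanest through the correspondence theorem for $S/\langle x\rangle\cong R$, rather than trying to invert directly a putative unit $1-m$ with $m\in M$, since $M$ need not sit inside the Jacobson radical. Everything else is routine bookkeeping with power series coefficients. A conceivable alternative would be to transport $\delta$ along the surjection $R[[x]]\twoheadrightarrow R$ using the identity $J(R/S_r)=\delta(R)/S_r$, but controlling the right socle of a power series ring looks more troublesome, so I would not pursue that route.
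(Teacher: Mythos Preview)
Your argument is correct. The key steps---that $\langle x\rangle$ is an essential right ideal of $S=R[[x]]$ (via the injectivity of right multiplication by $x$), that $\widetilde M=M+\langle x\rangle$ is maximal (via the lattice isomorphism $S/\langle x\rangle\cong R$), and hence that $\widetilde M$ is an essential maximal right ideal of $S$ containing every element of $\delta(S)$---are all sound, and the conclusion $a_0\in\bigcap M=\delta(R)$ follows.

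As for comparison: the paper does not actually supply a proof of this lemma. It is stated with the attribution \cite[Proposition 3.15]{ozcangurgun} and used immediately afterward without further justification. So there is no in-paper argument to compare against; you have filled in what the authors left to the reference. Your route through the $R_1$ description of $\delta$ (intersection of essential maximal right ideals) is natural here precisely because essentiality lifts so cleanly along the inclusion $\langle x\rangle\subseteq\widetilde M$, and your instinct to avoid the $J(R/S_r)=\delta(R)/S_r$ approach is well founded---tracking the right socle of $R[[x]]$ would indeed be messier.
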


\begin{prop}
Let $R$ be a ring. If $R[[x]]$ is a $\delta$-reversible ring, Then $R$ is a $\delta$-reversible ring.
\end{prop}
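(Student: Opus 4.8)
The plan is to pull back a zero relation in $R$ to a zero relation in $R[[x]]$, apply $\delta$-reversibility there, and then push the result back down to $R$ using Lemma \ref{polinomial}. Concretely, suppose $a,b\in R$ with $ab=0$. Regard $a$ and $b$ as constant power series in $R[[x]]$; then their product in $R[[x]]$ is still $0$. Since $R[[x]]$ is $\delta$-reversible, we obtain $ba\in\delta(R[[x]])$, where again $ba$ is viewed as a constant series.

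Next I would apply Lemma \ref{polinomial}, which gives $\delta(R[[x]])\subseteq\delta(R)+\langle x\rangle$. Hence $ba=r+xh(x)$ for some $r\in\delta(R)$ and $h(x)\in R[[x]]$. Comparing the constant terms of both sides of this equality in $R[[x]]$ — the left side $ba$ is a constant series, so its $x^0$-coefficient is $ba$, while on the right the $x^0$-coefficient of $r+xh(x)$ equals the $x^0$-coefficient of $r$ — we conclude $ba\in\delta(R)$ (more carefully: $r\in\delta(R)$ as an element of $R[[x]]$ means $r$ is a series all of whose coefficients lie in $\delta(R)$ under the identification, so its constant term is in $\delta(R)$, and $ba$ equals that constant term). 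Therefore $R$ is $\delta$-reversible.

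The only delicate point is making the identification of $\delta(R)$ inside $R[[x]]$ precise, i.e. clarifying how an element of $\delta(R)+\langle x\rangle$ decomposes and why its constant coefficient lands in $\delta(R)$; this is routine once one recalls that $\delta(R)$, as a subset of $R[[x]]$ via the constant embedding, is contained in $\delta(R)R[[x]]$ and that $\langle x\rangle$ consists of series with zero constant term. No other obstacle arises: the argument is the standard ``restrict a relation to constants, solve upstairs, read off the constant term'' technique, and it works precisely because constants form a subring of $R[[x]]$ and the containment in Lemma \ref{polinomial} controls $\delta(R[[x]])$ from above.
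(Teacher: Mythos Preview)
Your proposal is correct and matches the paper's approach: the paper's proof is simply ``The proof is clear via Lemma \ref{polinomial},'' and your write-up supplies exactly the details behind that sentence---embed $a,b$ as constants, use $\delta$-reversibility of $R[[x]]$ to get $ba\in\delta(R[[x]])\subseteq\delta(R)+\langle x\rangle$, and read off the constant term. Your parenthetical about $\delta(R)$ inside $R[[x]]$ is slightly over-elaborate (here $\delta(R)$ just means the constant embedding, so $r$ is already a constant in $\delta(R)$), but this does not affect the argument.
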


\begin{proof}
The proof is clear via Lemma \ref{polinomial}. 
\end{proof}

Recall that a \textit{Morita context} is a 4-tuple  $
\begin{bmatrix}
  A & M \\
  N & B \\
\end{bmatrix}$ where $A,B$ are rings, $_AM_B$ and $_BN_A$ are bimodules and there exist context products $M\times N \rightarrow A$ and $N\times M \rightarrow B$ written multiplicatively as $(w,z)\mapsto wz$ and $(z,w)\mapsto zw$ such that $
\begin{bmatrix}
  A & M \\
  N & B \\
\end{bmatrix}$ is an associative ring with the obvious matrix operations. 

Morita contexts were introduced by Morita in \cite{Mor}. A Morita context $
\begin{bmatrix}
  A & M \\
  N & B \\
\end{bmatrix}$ is called \textit{trivial} if the context products are trivial, i.e. $MN=0$ and $NM=0$. This is also called \textit{null context}.

\begin{lem}\label{Morita}
    Let $R=
\begin{bmatrix}
  A & M \\
  N & B \\
\end{bmatrix}$ is a trivial Morita context. Then $\delta(R)\subseteq 
\begin{bmatrix}
  \delta(A) & M \\
  N & \delta(B) \\
\end{bmatrix}.$
\end{lem}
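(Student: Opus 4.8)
\textbf{Proof plan for Lemma \ref{Morita}.}

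The plan is to identify an ideal $K$ of $R$ which is ``large enough'' so that $R/K \cong A/\delta(A) \times B/\delta(B)$, and then to argue that every essential maximal right ideal of $R$ contains $K$, or more directly that $\delta(R) \subseteq K$ where $K = \begin{bmatrix} \delta(A) & M \\ N & \delta(B) \end{bmatrix}$. First I would check that $K$ is in fact a (two-sided) ideal of $R$: because the context products are trivial ($MN = 0 = NM$), multiplying an element of $K$ on either side by an element of $R$ keeps the diagonal entries inside $\delta(A)$ and $\delta(B)$ (using that $\delta(A)$, $\delta(B)$ are ideals of $A$, $B$, and that $M$, $N$ absorb multiplication), while the off-diagonal entries are free to be anything in $M$ or $N$. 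This is the routine part.

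The key step is then to compute $R/K$. Since $K$ contains both off-diagonal corners entirely and the trivial context products kill all cross terms, the quotient $R/K$ is isomorphic as a ring to $(A/\delta(A)) \times (B/\delta(B))$; concretely the map $\begin{bmatrix} a & m \\ n & b \end{bmatrix} \mapsto (a + \delta(A),\, b + \delta(B))$ is a surjective ring homomorphism with kernel $K$ — multiplicativity is exactly where triviality of the context is used, since $\begin{bmatrix} a & m \\ n & b \end{bmatrix}\begin{bmatrix} a' & m' \\ n' & b' \end{bmatrix}$ has diagonal entries $aa' + mn'$ and $nm' + bb'$, and $mn', nm'$ lie in $\delta(A), \delta(B)$ because $MN = NM = 0 \subseteq \delta(A), \delta(B)$. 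Now $A/\delta(A)$ and $B/\delta(B)$ each have zero Zhou radical (since $\delta(A/\delta(A)) = \delta(A)/\delta(A) = 0$, and likewise for $B$ — this uses that $\delta$ of a quotient by its own Zhou radical vanishes, which follows from Theorem~1.6/Remark on $\delta(R)$ being the intersection of essential maximal right ideals), hence $\delta(R/K) = \delta((A/\delta(A)) \times (B/\delta(B))) = \delta(A/\delta(A)) \times \delta(B/\delta(B)) = 0$.

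Finally I would invoke the fact that the Zhou radical behaves well with respect to ideals: for any ideal $I$ of a ring $R$ one has $\pi(\delta(R)) \subseteq \delta(R/I)$, where $\pi : R \to R/I$ is the canonical surjection — this is immediate from the characterization of $\delta(R)$ as the intersection of essential maximal right ideals, since the preimage of an essential maximal right ideal of $R/I$ is an essential maximal right ideal of $R$. Applying this with $I = K$ gives $\pi(\delta(R)) \subseteq \delta(R/K) = 0$, i.e. $\delta(R) \subseteq K = \begin{bmatrix} \delta(A) & M \\ N & \delta(B) \end{bmatrix}$, as desired. The main obstacle I anticipate is making the computation of $R/K$ fully rigorous — in particular confirming that the context products really do land inside the diagonal radicals so that the projection is multiplicative — and being careful that the ``$\delta$ of a product is the product of $\delta$'s'' and ``$\delta(R/I) \supseteq \pi(\delta(R))$'' facts are either already available in the paper or cheap to justify from Theorem~1.6; everything else is bookkeeping with $2\times 2$ matrices.
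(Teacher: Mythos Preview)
Your argument has a genuine gap at the step where you assert $\delta(A/\delta(A)) = 0$. Unlike the Jacobson radical, the Zhou radical is \emph{not} a Kurosh--Amitsur radical: $\delta(R/\delta(R))$ can be nonzero. For a concrete counterexample take $A = \mathbb{Z}_4$. This is a local ring whose unique maximal ideal $2\mathbb{Z}_4$ is essential, so $\delta(\mathbb{Z}_4) = 2\mathbb{Z}_4$; but then $A/\delta(A) \cong \mathbb{Z}_2$ is semisimple, and by Remark~2.2(2) one has $\delta(\mathbb{Z}_2) = \mathbb{Z}_2 \neq 0$. Consequently, in your setup $\delta(R/K) = \delta(A/\delta(A)) \times \delta(B/\delta(B))$ need not vanish, and the implication $\pi(\delta(R)) \subseteq \delta(R/K) = 0$ collapses. (Indeed, with $A = B = \mathbb{Z}_4$ and $M = N = 0$ the lemma still holds, yet your method yields only $\pi(\delta(R)) \subseteq \mathbb{Z}_2 \times \mathbb{Z}_2$, which is no information at all.) The underlying reason is exactly the one you worried about in passing: essentiality is not preserved under passing to quotients, so an essential maximal right ideal $L$ of $R$ containing $\delta(R)$ need not give an essential maximal right ideal $L/\delta(R)$ of $R/\delta(R)$.

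The paper avoids this entirely by working elementwise with the characterization $R_3$ of Theorem~2.1: given $r = \begin{bmatrix} a & m \\ n & b \end{bmatrix} \in \delta(R)$ and a right ideal $I \le A$ with $aA + I = A$, one lifts to $rR + \begin{bmatrix} I & M \\ N & B \end{bmatrix} = R$, uses $r \in \delta(R)$ to conclude that the latter is a direct summand of $R$, and reads off from the idempotent generator that $I$ is a direct summand of $A$. This shows $a \in \delta(A)$ directly, with no appeal to the vanishing of $\delta$ on a quotient.
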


\begin{proof}
Let $r=
\begin{bmatrix}
  a & m \\
  n & b \\
\end{bmatrix} \in \delta(R)$. To see that $a\in \delta(A)$, let $aA+I=A$ where $I$ is a right ideal of $A$. It is enough to show that $I$ is a direct summand of $A$. Since $rR+ 
\begin{bmatrix}
  I & M \\
  N & B \\
\end{bmatrix}=R$ and $r\in \delta(R)$, $
\begin{bmatrix}
  I & M \\
  N & B \\
\end{bmatrix}$ is a direct summand of $R$. Hence there exists $\Big(
\begin{bmatrix}
  f & m \\
  n & g \\
\end{bmatrix}\Big)^2=
\begin{bmatrix}
  f & m \\
  n & g \\
\end{bmatrix} \in R$ such that $
\begin{bmatrix}
  I & M \\
  N & B \\
\end{bmatrix}=
\begin{bmatrix}
  f & m \\
  n & g \\
\end{bmatrix}R$. Thus we have $f^2=f \in A$ and $I=fA$ which shows that $I$ is a direct summand of $A$. So $a\in \delta(A)$. Similarly, it can be shown that $b\in \delta(B)$. 
\end{proof}

\begin{thm} 
   Let $R=
\begin{bmatrix}
  A & M \\
  N & B \\
\end{bmatrix}$ is trivial Morita context. If $R$ is $\delta$-reversible, then $A$ and $B$ are $\delta$-reversible.
\end{thm}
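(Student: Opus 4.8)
The plan is to embed $A$ (and symmetrically $B$) into $R$ as a corner ring and transfer $\delta$-reversibility along this embedding, using Lemma \ref{Morita} to control where $\delta(R)$ sits. First I would let $e=\begin{bmatrix} 1 & 0 \\ 0 & 0 \end{bmatrix}\in R$, which is an idempotent since the context is trivial (the off-diagonal products vanish, so $e^2=e$ holds just as in an ordinary matrix ring). Then $eRe=\begin{bmatrix} A & 0 \\ 0 & 0 \end{bmatrix}\cong A$ as rings. By Proposition \ref{ere}, since $R$ is $\delta$-reversible, $eRe$ is $\delta$-reversible, and hence so is $A$; the same argument with $e'=\begin{bmatrix} 0 & 0 \\ 0 & 1 \end{bmatrix}$ gives that $B$ is $\delta$-reversible.

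If one prefers not to invoke Proposition \ref{ere} and instead wants a direct argument, here is the alternative route. Take $a,b\in A$ with $ab=0$. Form $\alpha=\begin{bmatrix} a & 0 \\ 0 & 0 \end{bmatrix}$ and $\beta=\begin{bmatrix} b & 0 \\ 0 & 0 \end{bmatrix}$ in $R$; because $MN=0=NM$, matrix multiplication in $R$ on these diagonal-supported elements is entrywise, so $\alpha\beta=\begin{bmatrix} ab & 0 \\ 0 & 0 \end{bmatrix}=0$. Since $R$ is $\delta$-reversible, $\beta\alpha=\begin{bmatrix} ba & 0 \\ 0 & 0 \end{bmatrix}\in\delta(R)$. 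By Lemma \ref{Morita}, $\delta(R)\subseteq\begin{bmatrix} \delta(A) & M \\ N & \delta(B) \end{bmatrix}$, so reading off the $(1,1)$-entry gives $ba\in\delta(A)$. Thus $A$ is $\delta$-reversible, and the argument for $B$ is identical using the lower-right corner.

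The only genuinely delicate point — and the step I would be most careful about — is the passage $eRe\cong A$ at the level of \emph{rings with identity} and the compatibility of $\delta$ with this corner: one must check that $\delta(eRe)=e\,\delta(R)\,e$ (this is exactly what is used inside the proof of Proposition \ref{ere}, so invoking that proposition sidesteps the issue), or, in the direct version, that Lemma \ref{Morita} genuinely applies. Everything else is bookkeeping: verifying $e^2=e$, verifying that products of diagonal-supported matrices stay diagonal-supported under the trivial context, and extracting the corner entry. I would write the proof in the first (shorter) form, citing Proposition \ref{ere}, and perhaps remark that triviality of the context is exactly what makes $\begin{bmatrix} 1 & 0 \\ 0 & 0 \end{bmatrix}$ idempotent so that the corner machinery is available.
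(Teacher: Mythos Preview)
Your proposal is correct, and your first route---applying Proposition~\ref{ere} to the corner idempotents $e=\begin{bmatrix}1&0\\0&0\end{bmatrix}$ and $1-e$---is exactly the paper's (one-line) proof. One small correction to your commentary: the element $e$ is idempotent in \emph{any} Morita context, trivial or not, since its off-diagonal entries are already zero; triviality of the context plays no role in that step (it is needed only for Lemma~\ref{Morita} in your alternative direct argument).
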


\begin{proof}
Assume that $R$ is $\delta$-reversible. Then by Lemma \ref{ere}, $A$ and $B$ are $\delta$-reversible. 
\end{proof}

 Let $S$ and $T$ be any rings, $M$ an
$S$-$T$-bimodule and $R$ the formal triangular matrix ring $
\begin{bmatrix}
S&M\\
0&T\\
\end{bmatrix}
$. It is proved in \cite[Lemma 5.1]{ozcangurgun} that $\delta(R)\subseteq
\begin{bmatrix}
\delta(S)&M\\
0&\delta(T)\\
\end{bmatrix}
$.

\begin{cor}\label{moritacont}
Let $R=
\begin{bmatrix}
S&M\\
0&T\\

\end{bmatrix}
$. If $R$ is $\delta$-reversible, then $S$ and $T$ are $\delta$-reversible.
\end{cor}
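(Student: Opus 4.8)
The plan is to deduce Corollary~\ref{moritacont} as a special case of the Morita-context theorem just proved. The formal triangular matrix ring $R=\begin{bmatrix} S&M\\ 0&T\end{bmatrix}$ is precisely the trivial Morita context $\begin{bmatrix} S&M\\ N&T\end{bmatrix}$ with $N=0$: the context products $M\times N\to S$ and $N\times M\to T$ are forced to be zero because $N=0$, so the triviality hypothesis $MN=0$ and $NM=0$ holds automatically. Hence the preceding Theorem applies verbatim.

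First I would note that $R$, viewed this way, satisfies the hypothesis of the Theorem, so if $R$ is $\delta$-reversible then both corner rings $S$ and $T$ are $\delta$-reversible. Concretely, the Theorem's proof goes through Lemma~\ref{ere}: the diagonal idempotents $e_{11}=\begin{bmatrix}1&0\\0&0\end{bmatrix}$ and $e_{22}=\begin{bmatrix}0&0\\0&1\end{bmatrix}$ satisfy $e_{11}Re_{11}\cong S$ and $e_{22}Re_{22}\cong T$, and Proposition~\ref{ere} says every corner $eRe$ of a $\delta$-reversible ring is $\delta$-reversible. So the one-line argument is: apply the Morita-context Theorem (or directly Proposition~\ref{ere} with $e=e_{11}$ and $e=e_{22}$) to conclude $S$ and $T$ are $\delta$-reversible.

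There is essentially no obstacle here; the only thing to be careful about is making the identification between the formal triangular matrix ring and a trivial Morita context explicit, and recalling that the parenthetical remark before the corollary ($\delta(R)\subseteq\begin{bmatrix}\delta(S)&M\\0&\delta(T)\end{bmatrix}$, from \cite[Lemma 5.1]{ozcangurgun}) is exactly the triangular analogue of Lemma~\ref{Morita}, though it is not even needed for this direction. The proof is therefore:

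\begin{proof}
The ring $R=\begin{bmatrix} S&M\\ 0&T\end{bmatrix}$ is the trivial Morita context $\begin{bmatrix} S&M\\ N&T\end{bmatrix}$ with $N=0$, since then the context products $MN$ and $NM$ are necessarily zero. If $R$ is $\delta$-reversible, then by the previous Theorem $S$ and $T$ are $\delta$-reversible. Alternatively, the idempotents $e_{11}=\begin{bmatrix}1&0\\0&0\end{bmatrix}$ and $e_{22}=\begin{bmatrix}0&0\\0&1\end{bmatrix}$ of $R$ satisfy $e_{11}Re_{11}\cong S$ and $e_{22}Re_{22}\cong T$, so the claim follows directly from Proposition~\ref{ere}.
\end{proof}
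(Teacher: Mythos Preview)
Your proof is correct and matches the paper's intended approach: the corollary is stated without proof precisely because the formal triangular matrix ring is the trivial Morita context with $N=0$, so the preceding Theorem applies directly. Your alternative via Proposition~\ref{ere} with the diagonal idempotents is also fine and is in fact how the Theorem itself was proved.
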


\begin{cor}\label{tnr}
    Let $R$ be a ring. If $T_n(R)$ is $\delta$-reversible, then $R$ is $\delta$-reversible.
\end{cor}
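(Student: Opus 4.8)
The plan is to reduce Corollary \ref{tnr} directly to the preceding results without reworking anything from scratch. The observation is that the upper triangular matrix ring $T_n(R)$ is itself a formal triangular matrix ring: we can write $T_n(R) = \begin{bmatrix} R & M \\ 0 & T_{n-1}(R) \end{bmatrix}$, where $M = R^{n-1}$ is viewed as an $R$-$T_{n-1}(R)$-bimodule in the obvious way (the first row to the right of the diagonal entry), so that the matrix multiplication of $T_n(R)$ agrees with the formal triangular matrix ring structure. Alternatively, and even more cheaply, one may simply apply Corollary \ref{moritacont} with $S = R$ and $T = T_{n-1}(R)$.

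First I would note that $R$ embeds in $T_n(R)$ as a corner ring: the idempotent $e = E_{11} \in T_n(R)$ satisfies $eT_n(R)e \cong R$. Then, assuming $T_n(R)$ is $\delta$-reversible, Proposition \ref{ere} immediately gives that $eT_n(R)e$, hence $R$, is $\delta$-reversible. This is the quickest route and sidesteps the bimodule bookkeeping entirely; I would present the proof in essentially one line, citing Proposition \ref{ere}.

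For completeness one could instead invoke Corollary \ref{moritacont} via the decomposition $T_n(R) = \begin{bmatrix} R & R^{n-1} \\ 0 & T_{n-1}(R) \end{bmatrix}$, which yields that both $R$ and $T_{n-1}(R)$ are $\delta$-reversible; an induction on $n$ then also recovers the statement, but this is strictly more work than the corner-ring argument and gives nothing extra here. I would therefore only mention it as a remark, if at all.

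There is essentially no obstacle: the only point requiring a moment's care is the identification $E_{11}\,T_n(R)\,E_{11} \cong R$, which is routine, and the fact that $\delta$ commutes with passing to this corner — but that is exactly what is built into the proof of Proposition \ref{ere} via $e\,\delta(R)\,e = \delta(eRe)$, applied here with the ring $T_n(R)$ in place of $R$. Hence the corollary follows with no new ideas, and the proof is:

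\begin{proof}
Let $e=E_{11}$ be the matrix unit in $T_n(R)$. Then $e^2=e$ and $eT_n(R)e\cong R$. If $T_n(R)$ is $\delta$-reversible, then $eT_n(R)e$ is $\delta$-reversible by Proposition \ref{ere}, and hence so is $R$.
\end{proof}
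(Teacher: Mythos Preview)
Your proposal is correct and essentially matches the paper's approach: the paper states this as an immediate corollary of Corollary~\ref{moritacont} (no proof given), whose own proof in turn rests on Proposition~\ref{ere}, so your direct appeal to Proposition~\ref{ere} via the corner idempotent $e=E_{11}$ just collapses that two-step chain into one line. The alternative you sketch through the decomposition $T_n(R)=\begin{bmatrix} R & R^{n-1}\\ 0 & T_{n-1}(R)\end{bmatrix}$ and Corollary~\ref{moritacont} is exactly the route the paper's placement suggests.
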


\begin{rem}
Let $R$ be a ring. Even if $R$ is $\delta$-reversible, $M_n(R)$ does not have to be $\delta$-reversible in general. We may assume $n=2$ and $R=\mathbb Z$. It is clear that $\mathbb Z$ is $\delta$-reversible. Nevertheless $M_2(\mathbb Z)$ is not a $\delta$-reversible ring by Example \ref{matrixring}.
\end{rem}

\noindent{\bf The rings $H_{(s,t)}(R)$:} Let $R$ be a ring and  $s, t\in
C(R)\cap U(R)$. Set $$H_{(s,t)}(R) = \left
\{\begin{bmatrix}a&0&0\\c&d&e\\0&0&f
\end{bmatrix}\in M_3(R)\mid a, c, d, e, f\in R, a - d = sc, d - f = te\right \}.$$
Then $H_{(s,t)}(R)$ is a subring of $M_3(R)$. 
 Now we give an important result which is proved in \cite{Burcu}.

\begin{lem}\label{Hst}
    Let $R$ be a ring. Then $$\delta(H_{(s,t)}(R))=\left \{\begin{bmatrix}a&0&0\\c&d&e\\0&0&f
\end{bmatrix} \in M_2(R) : a,d,f \in \delta(R)\right \}.$$
\end{lem}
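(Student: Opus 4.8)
The plan is to identify $H:=H_{(s,t)}(R)$ with the direct product $R\times R\times R$ and then read off $\delta(H)$ from the fact — already used for $\prod_{i}R_i$ above — that $\delta$ commutes with finite direct products. First I would define $\pi\colon H\to R\times R\times R$ by sending a matrix in $H$ to its diagonal triple $(a,d,f)$. Since $s,t$ are central units, the defining relations $a-d=sc$ and $d-f=te$ force $c=s^{-1}(a-d)$ and $e=t^{-1}(d-f)$, so $(a,d,f)$ determines the matrix uniquely, while conversely every triple $(a,d,f)\in R^{3}$ yields a legitimate element of $H$. Thus $\pi$ is a bijection, and it is visibly additive and unital.

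The one substantive point is that $\pi$ is multiplicative. Multiplying two matrices of the prescribed shape in $M_3(R)$, the diagonal of the product is the entrywise product of the diagonals, so $\pi(XY)=\pi(X)\pi(Y)$ once one checks that $XY$ again lies in $H$, i.e.\ that its $(2,1)$- and $(2,3)$-entries satisfy the defining relations. This is precisely where $a-d=sc$, $d-f=te$ and the centrality of $s,t$ enter: the new $(2,1)$-entry is $ca'+dc'$, and $s(ca'+dc')=(sc)a'+d(sc')=(a-d)a'+d(a'-d')=aa'-dd'$, which is exactly the difference of the first two diagonal entries of $XY$; the $(2,3)$-entry is dealt with the same way. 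Hence $\pi$ is a ring isomorphism $H\cong R\times R\times R$.

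It then remains to invoke $\delta(R\times R\times R)=\delta(R)\times\delta(R)\times\delta(R)$ — the finite case of $\delta\!\left(\prod_{i}R_i\right)=\prod_{i}\delta(R_i)$, itself a consequence of $\delta(\,\cdot\,)/S_r=J(\,\cdot\,/S_r)$ together with $J$ and $S_r$ respecting finite products — and to transport it back through $\pi^{-1}$, giving $\delta(H)=\{X\in H: a,d,f\in\delta(R)\}$. For such an $X$ the entries $c=s^{-1}(a-d)$ and $e=t^{-1}(d-f)$ automatically lie in $\delta(R)$, so the stated description is consistent. An equivalent route, avoiding the explicit isomorphism, is to observe that the three elements of $H$ that $\pi$ carries to $(1,0,0)$, $(0,1,0)$, $(0,0,1)$ are orthogonal idempotents $e_1,e_2,e_3$ with $e_1+e_2+e_3=1$, $e_iHe_j=0$ for $i\ne j$, and $e_iHe_i\cong R$; applying $\delta(eHe)=e\,\delta(H)\,e$ (as in Proposition \ref{ere}) to each $e_i$ forces the diagonal entries of every element of $\delta(H)$ into $\delta(R)$ and, via $X=e_1Xe_1+e_2Xe_2+e_3Xe_3$, yields the reverse inclusion too. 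In either approach the only real obstacle is the bookkeeping — verifying closure of $H$ under multiplication, and in the second approach the orthogonality $e_iHe_j=0$ and the identifications $e_iHe_i\cong R$ — and none of it is conceptually hard.
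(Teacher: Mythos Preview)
Your argument is correct: the map $\pi\colon H_{(s,t)}(R)\to R\times R\times R$, $X\mapsto(a,d,f)$, is a ring isomorphism (the verification that $XY\in H_{(s,t)}(R)$ and that the diagonal of $XY$ is the product of the diagonals is exactly as you describe), and then $\delta(R^{3})=\delta(R)^{3}$ gives the result. The alternative via the central orthogonal idempotents $e_1,e_2,e_3$ is really the same proof in different clothing, since $e_iHe_j=0$ for $i\ne j$ is precisely the statement that $H$ decomposes as a ring product.

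As for comparison with the paper: there is nothing to compare. The paper does not prove this lemma at all; it simply records it as ``an important result which is proved in \cite{Burcu}''. Your proof is therefore strictly more than what the paper offers, and the isomorphism $H_{(s,t)}(R)\cong R\times R\times R$ is in fact the cleanest way to see not only this lemma but also Theorem~\ref{Hdelta} immediately (since $R$ is $\delta$-reversible iff $R\times R\times R$ is, by the product proposition already in the paper). One small remark: your appeal to ``$\delta(eHe)=e\,\delta(H)\,e$ (as in Proposition~\ref{ere})'' is slightly misdirected --- that identity is used \emph{inside} the proof of Proposition~\ref{ere} rather than being its statement --- but the identity itself is standard and your use of it is fine.
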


Now we have the following.

\begin{thm}\label{Hdelta}
    Let $R$ be a ring. Then $R$ is $\delta$-reversible if and only if $H_{(s,t)}(R)$ is $\delta$-reversible.
\end{thm}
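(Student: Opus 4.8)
The plan is to prove both directions by reducing them to the characterization of $\delta$-reversibility via nilpotent-square elements (item (2) of the Lemma characterizing $\delta$-reversible rings: $r^2=0 \Rightarrow r\in\delta(R)$), using the explicit description of $\delta(H_{(s,t)}(R))$ in Lemma~\ref{Hst}. For the forward direction, suppose $R$ is $\delta$-reversible and let $X\in H_{(s,t)}(R)$ with $X^2=0$; writing $X=\begin{bmatrix}a&0&0\\c&d&e\\0&0&f\end{bmatrix}$ with $a-d=sc$ and $d-f=te$, I would compute $X^2$ entrywise and read off $a^2=0$, $d^2=0$, $f^2=0$ (the three diagonal entries), plus the off-diagonal relations $ca+dc=0$ and $ed+fe=0$ coming from the $(2,1)$ and $(2,3)$ positions. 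Since $R$ is $\delta$-reversible, $a,d,f\in\delta(R)$, and then Lemma~\ref{Hst} gives $X\in\delta(H_{(s,t)}(R))$ directly — no condition on $c,e$ is needed because $\delta(H_{(s,t)}(R))$ imposes no constraint on those off-diagonal entries. So $H_{(s,t)}(R)$ is $\delta$-reversible.

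The converse is the easier half and follows from general machinery already in the paper: I expect to realize $R$ as a corner ring $eH_{(s,t)}(R)e$ for a suitable idempotent $e$ (for instance $e=\begin{bmatrix}1&0&0\\0&0&0\\0&0&0\end{bmatrix}$, or the idempotent picking out any one of the three diagonal slots — one must check such an $e$ actually lies in $H_{(s,t)}(R)$, i.e. satisfies $a-d=sc$, $d-f=te$; the slot-$(1,1)$ idempotent has $a=1,d=f=c=e=0$, which works), and then invoke Proposition~\ref{ere} to conclude that $eH_{(s,t)}(R)e\cong R$ is $\delta$-reversible whenever $H_{(s,t)}(R)$ is. Alternatively one can argue more concretely: given $uv=0$ in $R$, embed $u,v$ as the $(1,1)$ entries (with all other free parameters zero, which is compatible with the defining relations), multiply, and pull back the conclusion using $\delta(H_{(s,t)}(R))\cap(\text{that slot})=\delta(R)$ from Lemma~\ref{Hst}.

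The main obstacle will be the forward direction's bookkeeping: one must be careful that $H_{(s,t)}(R)$ is \emph{not} closed under arbitrary entrywise operations — its elements are constrained by $a-d=sc$ and $d-f=te$ — so when I say "$X^2=0$ forces $a^2=0$" I should double-check that the relevant diagonal entries of the product really are just $a^2$, $d^2$, $f^2$ (they are, since the matrix is block-lower-triangular in the appropriate sense), and that I am not secretly using a relation that fails. The units $s,t\in C(R)\cap U(R)$ play no role in the annihilator computation itself; they only enter through Lemma~\ref{Hst}, which is quoted. Once the three diagonal squares vanish and $\delta$-reversibility of $R$ is applied, Lemma~\ref{Hst} closes the argument immediately, so the proof should be short modulo that one entrywise multiplication.
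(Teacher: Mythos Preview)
Your forward direction is correct and essentially the same idea as the paper's, just phrased through the square-zero characterization (Lemma~2.11) rather than the definition with two matrices $A,B$; the paper takes $AB=0$, reads off $a_1a_2=d_1d_2=f_1f_2=0$ on the diagonal, and applies Lemma~\ref{Hst} to $BA$, which amounts to the same diagonal bookkeeping you describe.

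There is, however, a genuine gap in your converse. The matrix
\[
e=\begin{bmatrix}1&0&0\\0&0&0\\0&0&0\end{bmatrix}
\]
does \emph{not} belong to $H_{(s,t)}(R)$: with $a=1$, $d=f=c=e=0$ one gets $a-d=1$ while $sc=0$, so the constraint $a-d=sc$ fails. For the same reason your alternative embedding of $u\in R$ ``as the $(1,1)$ entry with all other free parameters zero'' is not compatible with the defining relations. Thus neither Proposition~\ref{ere} with this $e$ nor the concrete slot-$(1,1)$ argument goes through as written.

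The easy repair is to use the scalar embedding $r\mapsto rI_3$, which \emph{does} lie in $H_{(s,t)}(R)$ (here $a=d=f=r$, $c=e=0$, so both constraints become $0=0$). If $uv=0$ in $R$ then $(uI_3)(vI_3)=0$ in $H_{(s,t)}(R)$, hence $(vu)I_3=(vI_3)(uI_3)\in\delta(H_{(s,t)}(R))$, and Lemma~\ref{Hst} forces $vu\in\delta(R)$. This is presumably what the paper's ``the converse is clear'' has in mind.
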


\begin{proof}
     Let $R$ be a $\delta$-reversible ring. Assume that $A=\begin{bmatrix}a_1&0&0\\c_1&d_1&e_1\\0&0&f_1
\end{bmatrix}$, $B=\begin{bmatrix}a_2&0&0\\c_2&d_2&e_2\\0&0&f_2
\end{bmatrix} \in H_{(s,t)}(R)$ such that $AB=0$. With a basic computation we handle $a_1a_2=d_1d_2=f_1f_2=0$. Since $R$ is $\delta$-reversible, $a_2a_1,d_2d_1,f_2f_1 \in \delta(R)$. So $BA=\begin{bmatrix} a_2a_1&0&0\\c_2a_1+d_2c_1&d_2d_1&d_2e_1+e_2f_1\\0&0&f_2f_1
\end{bmatrix} \in \delta(H_{(s,t)}(R))$ by Lemma \ref{Hst}. The converse is clear.
\end{proof}

\noindent {\bf The rings $L_{(s,t)}(R)$:} Let $R$ be a ring and $s$, $t\in
C(R)\cap U(R)$. Set 
$$L_{(s,t)}(R) = \left
\{\begin{bmatrix}a&0&0\\sc&d&te\\0&0&f\end{bmatrix}\in M_3(R)\mid
a, c, d, e, f\in R\right \}.$$ \\Then $L_{(s,t)}(R)$ is a subring of $M_3(R)$. Now we give an important result which is proved in \cite{Burcu}

\begin{lem}\label{Lst}
    Let $R$ be a ring. Then $$\delta(L_{(s,t)}(R))=  \left
\{\begin{bmatrix}a&0&0\\sc&d&te\\0&0&f\end{bmatrix}\in M_3(R)\mid
a, d, f\in \delta(R) \right \}  $$
\end{lem}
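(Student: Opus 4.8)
The plan is to prove the two inclusions separately. Write $L=L_{(s,t)}(R)$ and let $K$ denote the right-hand set, i.e. the matrices in $L$ whose three diagonal entries $a,d,f$ all lie in $\delta(R)$. A direct check --- using that $\delta(R)$ is an ideal and that off-diagonal positions multiply into diagonal positions only against diagonal entries --- shows that $K$ is a two-sided ideal of $L$. The two tools I would use are the three orthogonal idempotents $e_1,e_2,e_3\in L$ carrying a $\overline 1$ in the $(1,1),(2,2),(3,3)$ positions respectively, and the square-zero ideal
\[
N=\left\{\begin{bmatrix}0&0&0\\ sc&0&te\\ 0&0&0\end{bmatrix}: c,e\in R\right\},\qquad N^2=0 .
\]
Note that $e_iLe_i\cong R$ for each $i$, the isomorphism sending $e_ire_i$ to the relevant diagonal entry of $r$, and that $N\subseteq K$.

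For the inclusion $\delta(L)\subseteq K$, I would take $r=\begin{bmatrix}a&0&0\\ sc&d&te\\ 0&0&f\end{bmatrix}\in\delta(L)$ and apply the identity $e\,\delta(L)\,e=\delta(eLe)$, valid for any idempotent $e$ (the same fact already used in the proof of Proposition \ref{ere}). This gives $e_ire_i\in\delta(e_iLe_i)$ for $i=1,2,3$; transporting along $e_iLe_i\cong R$ and reading off the three corners yields $a,d,f\in\delta(R)$, so $r\in K$. This direction is essentially computation-free.

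For the reverse inclusion $K\subseteq\delta(L)$ the idea is to show that $K$ is a $\delta$-small right ideal and then invoke \cite[Theorem 1.6]{Zhou}, which identifies $\delta(L)$ with the unique largest $\delta$-small right ideal. First, $N\subseteq J(L)$ because $N$ is nilpotent; since $J(L)$ is superfluous in $L_L$ it is $\delta$-small, and a submodule of a $\delta$-small submodule is again $\delta$-small \cite{Zhou}, so $N$ is $\delta$-small in $L$. Second, killing $N$ collapses the off-diagonal entries, so $L/N\cong R\times R\times R$, and under this isomorphism $K/N$ corresponds exactly to $\delta(R)\times\delta(R)\times\delta(R)=\delta(R\times R\times R)$, which is $\delta$-small in $L/N$ by the product formula for $\delta$ together with \cite[Theorem 1.6]{Zhou}. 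Finally the transitivity of $\delta$-smallness from \cite{Zhou} --- if $N$ is $\delta$-small in $L$ and $K/N$ is $\delta$-small in $L/N$, then $K$ is $\delta$-small in $L$ --- forces $K$ to be $\delta$-small, whence $K\subseteq\delta(L)$.

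The main obstacle is this reverse inclusion: one must verify that the elementary properties of $\delta$-small submodules (monotonicity under passing to submodules, and the $N\subseteq K$ transitivity step through the quotient $L/N$) are available in precisely the quoted form, and that the isomorphism $L/N\cong R\times R\times R$ genuinely identifies $K/N$ with the full product radical $\delta(R)^3$ rather than something smaller. The forward inclusion is routine once the corner identity $e\delta(L)e=\delta(eLe)$ is granted, and I note that a completely parallel argument recovers Lemma \ref{Hst} for $H_{(s,t)}(R)$.
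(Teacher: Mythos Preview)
Your argument is sound. For the forward inclusion the corner identity $e\,\delta(L)\,e=\delta(eLe)$ together with $e_iLe_i\cong R$ does exactly what you claim. For the reverse inclusion, the transitivity step you flag as the obstacle is genuine: if $K+X=L$ with $L/X$ singular, then modulo $N$ one gets $K/N+(X+N)/N=L/N$ with singular quotient, so $\delta$-smallness of $K/N$ forces $X+N=L$, and then $\delta$-smallness of $N\subseteq J(L)$ forces $X=L$. The identification $L/N\cong R\times R\times R$ carrying $K/N$ to $\delta(R)^3$ is immediate because $s,t$ are central units.

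That said, there is nothing in the paper to compare against: this lemma is not proved here but simply quoted as ``an important result which is proved in \cite{Burcu}''. Your self-contained argument therefore supplies what the paper omits. One small remark on your closing sentence: for $H_{(s,t)}(R)$ the parallel argument simplifies further, since the constraints $a-d=sc$ and $d-f=te$ with $s,t$ units force the off-diagonal entries to be determined by the diagonal, so the diagonal map $H_{(s,t)}(R)\to R\times R\times R$ is already a ring isomorphism and no passage through a nilpotent ideal $N$ is needed.
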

Now we have the following.
\begin{thm}
    Let $R$ be a ring. Then $R$ is $\delta$-reversible if and only if $L_{(s,t)}(R)$ is $\delta$-reversible.
\end{thm}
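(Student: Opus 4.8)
The plan is to mimic the proof of Theorem \ref{Hdelta} almost verbatim, since $L_{(s,t)}(R)$ has the same block shape as $H_{(s,t)}(R)$ but with the defining conditions on the off-diagonal entries removed, and since Lemma \ref{Lst} gives us the identical description of the radical (the diagonal entries $a,d,f$ lie in $\delta(R)$, with no constraint on $c,e$). First I would take $A=\begin{bmatrix}a_1&0&0\\sc_1&d_1&te_1\\0&0&f_1\end{bmatrix}$ and $B=\begin{bmatrix}a_2&0&0\\sc_2&d_2&te_2\\0&0&f_2\end{bmatrix}$ in $L_{(s,t)}(R)$ with $AB=0$, and compute the product block-by-block. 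The $(1,1)$, $(2,2)$ and $(3,3)$ entries of $AB$ are $a_1a_2$, $d_1d_2$ and $f_1f_2$ respectively (the $(2,2)$ entry also receives a contribution $sc_1\cdot 0 + d_1 d_2 + te_1\cdot 0 = d_1d_2$ since the first column below the top and the third row above the bottom only meet zeros in the other factor), so $AB=0$ forces $a_1a_2=d_1d_2=f_1f_2=0$.

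Next, since $R$ is $\delta$-reversible, I conclude $a_2a_1,\ d_2d_1,\ f_2f_1\in\delta(R)$. Then I would compute $BA$ explicitly: its diagonal entries are $a_2a_1$, $d_2d_1$, $f_2f_1$, all in $\delta(R)$, and its $(2,1)$ and $(2,3)$ entries are some elements of $R$ (of the form $sc_2a_1+d_2\cdot sc_1$ and $d_2\cdot te_1+te_2f_1$, which is fine since $s,t\in C(R)$ so these stay in the prescribed form $s(\cdot)$ and $t(\cdot)$), while all other entries vanish. By Lemma \ref{Lst}, a matrix of this block form lies in $\delta(L_{(s,t)}(R))$ precisely when its three diagonal entries lie in $\delta(R)$, which is exactly what we have established. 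Hence $BA\in\delta(L_{(s,t)}(R))$, so $L_{(s,t)}(R)$ is $\delta$-reversible.

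For the converse, I would derive $\delta$-reversibility of $R$ from that of $L_{(s,t)}(R)$ via the corner-ring criterion of Proposition \ref{ere}: the idempotent $e=\begin{bmatrix}1&0&0\\0&0&0\\0&0&0\end{bmatrix}$ lies in $L_{(s,t)}(R)$, and $eL_{(s,t)}(R)e\cong R$, so if the big ring is $\delta$-reversible then so is this corner, hence so is $R$. (Alternatively one could quote Lemma \ref{Lst} to check directly that $ab=0$ in $R$ lifts to $A,B$ in $L_{(s,t)}(R)$ with $AB=0$ and then read off $ba\in\delta(R)$ from $BA\in\delta(L_{(s,t)}(R))$.) I do not expect any genuine obstacle here: the only point requiring a little care is confirming that the off-diagonal entries of $BA$ genuinely stay inside the prescribed shape $\begin{bmatrix}a&0&0\\sc&d&te\\0&0&f\end{bmatrix}$, which works because $s$ and $t$ are central units, so the "hard part" is really just the bookkeeping of the $3\times 3$ multiplication.

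\begin{proof}
Let $R$ be a $\delta$-reversible ring and take
$A=\begin{bmatrix}a_1&0&0\\sc_1&d_1&te_1\\0&0&f_1\end{bmatrix}$,
$B=\begin{bmatrix}a_2&0&0\\sc_2&d_2&te_2\\0&0&f_2\end{bmatrix}$ in $L_{(s,t)}(R)$ with $AB=0$. Computing the product, the $(1,1)$, $(2,2)$ and $(3,3)$ entries are $a_1a_2$, $d_1d_2$ and $f_1f_2$, so $AB=0$ gives $a_1a_2=d_1d_2=f_1f_2=0$. Since $R$ is $\delta$-reversible, $a_2a_1$, $d_2d_1$, $f_2f_1\in\delta(R)$. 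Now
$$BA=\begin{bmatrix}a_2a_1&0&0\\ s(c_2a_1+d_2c_1)&d_2d_1& t(d_2e_1+e_2f_1)\\0&0&f_2f_1\end{bmatrix},$$
using that $s,t\in C(R)$, so $BA\in L_{(s,t)}(R)$ and its diagonal entries lie in $\delta(R)$. By Lemma \ref{Lst}, $BA\in\delta(L_{(s,t)}(R))$, hence $L_{(s,t)}(R)$ is $\delta$-reversible. Conversely, if $L_{(s,t)}(R)$ is $\delta$-reversible, then so is the corner ring $eL_{(s,t)}(R)e\cong R$ for $e=\begin{bmatrix}1&0&0\\0&0&0\\0&0&0\end{bmatrix}$ by Proposition \ref{ere}, so $R$ is $\delta$-reversible.
\end{proof}
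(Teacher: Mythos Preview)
Your proof is correct and follows exactly the approach the paper intends: the paper's own proof simply reads ``Proof is similar with Theorem \ref{Hdelta},'' and you have written out that analogous argument in full, using Lemma \ref{Lst} for the forward direction and Proposition \ref{ere} (the corner-ring criterion) for the converse, just as Theorem \ref{Hdelta} uses Lemma \ref{Hst} and declares the converse ``clear.''
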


\begin{proof}
    Proof is similar with Theorem \ref{Hdelta}.
\end{proof}

\noindent {\bf Generalized matrix rings:} Let $R$ be a ring and $s$ a central element of $R$. Then  $\begin{bmatrix}
R&R\\R&R\end{bmatrix}$ becomes a ring denoted by $K_s(R)$ with addition defined componentwise and multiplication defined in \cite{Kr} by
$$\begin{bmatrix} a_1&x_1\\y_1&b_1\end{bmatrix}\begin{bmatrix} a_2&x_2\\y_2&b_2\end{bmatrix} = \begin{bmatrix}a_1 a_2+sx_1y_2&a_1x_2+x_1b_2\\y_1a_2+b_1y_2&sy_1x_2+b_1b_2\end{bmatrix}.$$ In \cite{Kr}, $K_s(R)$ is called a {\it generalized matrix ring
over $R$}.
Now we give an important result.
\begin{lem}\label{K0R}
    $\delta(K_0(R))=\left
\{\begin{bmatrix}a&b\\c&d\end{bmatrix}\in K_0(R)\mid
a, d\in \delta(R)\right \}$.
\end{lem}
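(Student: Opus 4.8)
The plan is to mimic the structure of the proof of Lemma~\ref{Morita}, treating $K_0(R)=\begin{bmatrix}R&R\\R&R\end{bmatrix}$ (with the $s=0$ multiplication) as a special case of a Morita context in which the two off-diagonal bimodules are $R$ itself but the products $M\times N\to A$ and $N\times M\to B$ are \emph{both zero}, because setting $s=0$ kills exactly the terms $sx_1y_2$ and $sy_1x_2$. Thus $K_0(R)$ is literally a trivial Morita context $\begin{bmatrix}R&R\\R&R\end{bmatrix}$, and Lemma~\ref{Morita} immediately gives the inclusion $\delta(K_0(R))\subseteq\left\{\begin{bmatrix}a&b\\c&d\end{bmatrix}: a,d\in\delta(R)\right\}$. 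So the only real content is the reverse inclusion.

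For the reverse inclusion, first I would set $D=\left\{\begin{bmatrix}a&b\\c&d\end{bmatrix}\in K_0(R): a,d\in\delta(R)\right\}$ and check that $D$ is an ideal of $K_0(R)$; this is a routine matrix computation using that $\delta(R)$ is an ideal of $R$ and that in $K_0(R)$ the diagonal entries of a product $\begin{bmatrix}a_1&x_1\\y_1&b_1\end{bmatrix}\begin{bmatrix}a_2&x_2\\y_2&b_2\end{bmatrix}$ are $a_1a_2$ and $b_1b_2$. Next I would use the characterization $\delta(S)/S_r(S)=J(S/S_r(S))$ from Remark, or more directly the characterization $R_3$ of Theorem~\ref{...} (the ``$xR+K_R=R\Rightarrow K_R$ is a direct summand'' condition), applied to an element $\begin{bmatrix}a&b\\c&d\end{bmatrix}$ with $a,d\in\delta(R)$. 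Alternatively, and probably more cleanly, I would exploit the ring isomorphism $K_0(R)\cong T$ for a suitable formal triangular-type ring, or observe that $K_0(R)/D\cong (R/\delta(R))\times(R/\delta(R))$ via the two diagonal projections (the off-diagonal part of $K_0(R)$ sits inside $D$, and modulo it multiplication becomes diagonal), so $K_0(R)/D$ is a product of two semiprimitive rings once we pass to $\delta$; then pull back to conclude $D\subseteq\delta(K_0(R))$ by showing every element of $D$ acts like an element of the radical in the appropriate sense.

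Concretely, the cleanest route for the reverse inclusion: show $D\subseteq\delta(K_0(R))$ by verifying that $D$ is contained in the intersection of all essential maximal right ideals of $K_0(R)$. Take an essential maximal right ideal $\mathcal M$ of $K_0(R)$; I want to show $D\subseteq\mathcal M$. Since the off-diagonal ``strip'' $N=\begin{bmatrix}0&R\\R&0\end{bmatrix}$ of $K_0(R)$ is a nilpotent ideal (its square lands in the diagonal, and with $s=0$ one checks $N^2=\begin{bmatrix}0&0\\0&0\end{bmatrix}$ is \emph{not} quite zero — rather $N\cdot N$ has diagonal entries $x_1y_2$ which need not vanish — so actually $N$ is not nilpotent, which is why one must be careful), I would instead argue via the socle: by the Remark, $\delta(K_0(R))/S_r(K_0(R))=J(K_0(R)/S_r(K_0(R)))$, and reduce to computing the Jacobson radical of $K_0(R)$ modulo its right socle, where the diagonal entries being in $\delta(R)=$ (preimage of $J(R/S_r(R))$) do the job. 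The main obstacle I expect is precisely this: the off-diagonal entries are \emph{unconstrained} in the claimed formula, which is only believable because the whole off-diagonal part of $K_0(R)$ turns out to lie in the socle (or at least in $\delta(K_0(R))$) — verifying that $\begin{bmatrix}0&R\\R&0\end{bmatrix}\subseteq\delta(K_0(R))$ cleanly, rather than by brute force, is the crux, and I would handle it by exhibiting, for each such matrix $z$ and each $y\in K_0(R)$, a semisimple right ideal $Y$ with $(1+zy)K_0(R)\oplus Y=K_0(R)$ using characterization $R_5$ of Theorem~\ref{...} — here $(1+zy)$ is a unit whenever the diagonal correction is nilpotent, which it is since $(zy)$ has nilpotent diagonal part, so $Y=0$ works and we are done.
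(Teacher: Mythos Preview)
Your identification of $K_0(R)$ as a trivial Morita context (since $s=0$ kills both context products) is exactly right, and invoking Lemma~\ref{Morita} for the inclusion $\delta(K_0(R))\subseteq D$ is precisely what the paper does. For the reverse inclusion $D\subseteq\delta(K_0(R))$ the paper simply cites \cite{Burcu}; your attempt at a self-contained argument goes beyond the paper, but it has two genuine gaps.

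First, a computational slip. In $K_0(R)$ the $(1,1)$-entry of a product is $a_1a_2+sx_1y_2$, which for $s=0$ is just $a_1a_2$. Hence for $z,z'\in N:=\begin{bmatrix}0&R\\R&0\end{bmatrix}$ one gets $zz'=0$: the ideal $N$ \emph{is} nilpotent, with $N^2=0$, contrary to what you assert mid-paragraph. This actually helps you --- it gives $N\subseteq J(K_0(R))\subseteq\delta(K_0(R))$ for free, without any appeal to $R_5$.

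Second, and more seriously, your final sentence ``$Y=0$ works and we are done'' only establishes $N\subseteq\delta(K_0(R))$. You have not shown that the diagonal piece $\begin{bmatrix}\delta(R)&0\\0&\delta(R)\end{bmatrix}$ lies in $\delta(K_0(R))$, so the inclusion $D\subseteq\delta(K_0(R))$ is still open. Your earlier idea of passing to $K_0(R)/N\cong R\times R$ and pulling back $\delta(R)\times\delta(R)=\delta(R\times R)$ is the right instinct, but the step ``preimage of $\delta$ of the quotient lies in $\delta$ of the ring'' is not automatic for the Zhou radical and needs justification (e.g.\ a direct check via the $R_3$ or $R_5$ description for a diagonal element with entries in $\delta(R)$). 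As written, that justification is missing.
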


\begin{proof}
    $\left
\{\begin{bmatrix}a&b\\c&d\end{bmatrix}\in K_0(R)\mid
a, d\in \delta(R)\right \} \subseteq \delta(K_0(R))$ by \cite{Burcu}. $\delta(K_0(R)) \subseteq \left
\{\begin{bmatrix}a&b\\c&d\end{bmatrix}\in K_0(R)\mid
a, d\in \delta(R)\right \}$ can be shown similarly to the proof of Lemma \ref{Morita}.
\end{proof}

\begin{thm}
    Let $R$ be a ring. Then $R$ is $\delta$-reversible if and only if $K_0(R)$ is $\delta$-reversible.
\end{thm}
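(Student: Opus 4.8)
The plan is to mimic the proof of Theorem \ref{Hdelta}, exploiting the explicit description of $\delta(K_0(R))$ from Lemma \ref{K0R}. First I would prove the forward direction: assume $R$ is $\delta$-reversible and take $A=\begin{bmatrix}a_1&x_1\\y_1&b_1\end{bmatrix}$, $B=\begin{bmatrix}a_2&x_2\\y_2&b_2\end{bmatrix}$ in $K_0(R)$ with $AB=0$. Since $s=0$, the product formula collapses to the ordinary matrix product, so $AB=0$ gives the four scalar equations $a_1a_2+ x_1\cdot 0=a_1a_2=0$ (the $s$-term vanishes), $a_1x_2+x_1b_2=0$, $y_1a_2+b_1y_2=0$, and $b_1b_2=0$. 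In particular $a_1a_2=0$ and $b_1b_2=0$, so $\delta$-reversibility of $R$ yields $a_2a_1\in\delta(R)$ and $b_2b_1\in\delta(R)$. Computing $BA$ (again an honest matrix product because $s=0$), its $(1,1)$ entry is $a_2a_1$ and its $(2,2)$ entry is $b_2b_1$, both in $\delta(R)$; by Lemma \ref{K0R} this forces $BA\in\delta(K_0(R))$, regardless of what the off-diagonal entries are. The converse is the standard corner-ring argument: $K_0(R)$ contains the idempotent $e=\begin{bmatrix}1&0\\0&0\end{bmatrix}$ with $eK_0(R)e\cong R$, so if $K_0(R)$ is $\delta$-reversible then by Proposition \ref{ere} so is $R$; alternatively one just notes that the converse direction of the analogous Morita-context results (Corollary \ref{moritacont}) already covers it since $K_0(R)$ is a trivial Morita context.

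The one subtlety worth flagging is that $K_0(R)$ is genuinely the trivial generalized matrix ring: setting $s=0$ kills both cross terms $sx_1y_2$ and $sy_1x_2$, so $K_0(R)=\begin{bmatrix}R&R\\R&R\end{bmatrix}$ with componentwise-in-the-product structure is precisely the trivial Morita context $\begin{bmatrix}R&R\\R&R\end{bmatrix}$ with $MN=NM=0$. This means Lemma \ref{Morita} and Theorem \ref{moritacont} apply verbatim, and in fact the present theorem's forward direction is not subsumed by those earlier results (which only gave that the corners are $\delta$-reversible, not an iff). So the real content is just the explicit computation above together with Lemma \ref{K0R}.

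I expect no serious obstacle here: the only thing to be careful about is correctly reading off the entries of $BA$ from the generalized-matrix product formula with $s=0$ and confirming that the diagonal entries are exactly $a_2a_1$ and $b_2b_1$ (the off-diagonal entries $y_2a_1+b_2y_1$ and $a_2x_1+x_2b_1$ are irrelevant since Lemma \ref{K0R} places no constraint on them). Thus the proof I would write is:

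\begin{proof}
    Let $R$ be $\delta$-reversible and take $A=\begin{bmatrix}a_1&x_1\\y_1&b_1\end{bmatrix}$, $B=\begin{bmatrix}a_2&x_2\\y_2&b_2\end{bmatrix}\in K_0(R)$ with $AB=0$. Since $s=0$, the product reduces to $AB=\begin{bmatrix}a_1a_2&a_1x_2+x_1b_2\\y_1a_2+b_1y_2&b_1b_2\end{bmatrix}=0$, whence $a_1a_2=0$ and $b_1b_2=0$. As $R$ is $\delta$-reversible, $a_2a_1\in\delta(R)$ and $b_2b_1\in\delta(R)$. Then $BA=\begin{bmatrix}a_2a_1&a_2x_1+x_2b_1\\y_2a_1+b_2y_1&b_2b_1\end{bmatrix}\in\delta(K_0(R))$ by Lemma \ref{K0R}. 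The converse follows from Proposition \ref{ere}, since $eK_0(R)e\cong R$ for the idempotent $e=\begin{bmatrix}1&0\\0&0\end{bmatrix}$.
\end{proof}
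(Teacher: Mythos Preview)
Your proof is correct and follows essentially the same approach as the paper: for the forward direction you both read off the diagonal equations $a_1a_2=0$, $b_1b_2=0$ from $AB=0$ (using $s=0$), apply $\delta$-reversibility of $R$, and invoke Lemma~\ref{K0R} to conclude $BA\in\delta(K_0(R))$. For the converse the paper simply says it is clear from Lemma~\ref{K0R} (implicitly embedding $R$ in the top-left corner), whereas you appeal to Proposition~\ref{ere}; these are equivalent one-line arguments.
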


\begin{proof}
    Let $R$ be a $\delta$ reversible ring. Assume that $A=\begin{bmatrix}a&b\\c&d\end{bmatrix}$, $B=\begin{bmatrix}x&y\\z&t\end{bmatrix}$ and $AB=0$. So $ax=dt=0$. Since $R$ is $\delta$- reversible, $xa$, $td\in \delta(R)$. A basic computation shows that $BA\in \delta(R)$. By Lemma \ref{K0R}, converse is clear.
\end{proof}


\begin{thebibliography}{99}


\bibitem{Mbc} M. B. Calci, H. Chen, S. Halicioglu and A. Harmanci, {\it Reversibility of Rings with Respect to the Jacobson Radical}  Mediterr. J. Math., 14(3) (2017), 137-140.

\bibitem{Tugce} T. P. Calci, S. Halicioglu and A. Harmanci, {\it A generalization of $J$-quasipolar rings}, Misc. Math. Notes, 18(1) (2017), 155-165.

\bibitem{Co} P. M. Cohn, {\it Reversible rings}, Bull. London Math. Soc. 31(6)(1999),  641-648.
4471-4482.
\bibitem{ozcangurgun}  O. Gurgun, A. C. Ozcan, {\it A class of uniquely (strongly) clean rings} , Turkish J. Math., 38(1) (2014), 40-51.



\bibitem{Burcu} A. Harmanci, Y. Kurtulmaz and B. Ungor {\it Rings which are duo on Zhou radical}  SÃ£o Paulo J. Math. Sci. 16(2) (2022), 871â€“892.

\bibitem{Kr} Krylov, P.A., {\it On the isomorphism of generalized matrix rings}, Algebra Log., 47(4) (2008), 456â€“463

\bibitem{Mor} K. Morita, {\it Duality for modules and its applications to the theory of rings with
minimum condition}, Sci. Rep. Tokyo Kyoiku Diagaku Sect., (1958)
A6:83â€“142.





\bibitem{Zhou} Y. Zhou, {\it Generalizations of perfect, semiperfect and semiregular rings} Algebra Colloq., 7(3) 2000, 305-318.

\end{thebibliography}
\end{document}